\numberwithin{equation}{section}
\numberwithin{figure}{section}
\theoremstyle{plain}
\newtheorem{thm}{\protect\theoremname}[section]
\theoremstyle{definition}
\newtheorem{defn}[thm]{\protect\definitionname}
\theoremstyle{plain}
\newtheorem{lem}[thm]{\protect\lemmaname}
\newtheorem{prop}[thm]{\protect\propositionname}
\newtheorem{cor}[thm]{\protect\corollaryname}
\theoremstyle{definition}
\providecommand{\conjecturename}{Conjecture}
\providecommand{\definitionname}{Definition}
\providecommand{\lemmaname}{Lemma}
\providecommand{\problemname}{Problem}
\providecommand{\theoremname}{Theorem}
\providecommand{\corollaryname}{Corollary}
\providecommand{\remarkname}{Remark}
\providecommand{\propositionname}{Proposition}
\begin{document}
\title[Gap statistics and higher correlations for geometric progressions]{Gap statistics and higher correlations for geometric progressions modulo one}
\date{\today}
\author{Christoph Aistleitner, Simon Baker, Niclas Technau, and Nadav Yesha}
\begin{abstract}
Koksma's equidistribution theorem from 1935 states that for
Lebesgue almost every $\alpha>1$, the fractional parts of the geometric
progression $(\alpha^{n})_{n\geq1}$ are equidistributed modulo one. In the present paper we sharpen this result by
showing that for almost every $\alpha>1$, the correlations of all finite orders and hence the normalized gaps of $(\alpha^{n})_{n\geq1}$ mod 1 have a Poissonian limit distribution, thereby resolving a conjecture of the two first named authors. While an earlier approach used probabilistic methods in the form of martingale approximation, our reasoning in the present paper
is of an analytic nature and based upon the estimation of oscillatory integrals. This method is robust enough to allow us to extend our
results to a natural class of sub-lacunary sequences.
\end{abstract}

\subjclass[2010]{11K99, 60G55}
\thanks{CA is supported by the Austrian Science Fund (FWF), projects F-5512,
I-3466, I-4945 and Y-901. NT received funding 
from the European Research Council (ERC) 
under the European Union\textquoteright s Horizon 2020 research
and innovation program (Grant agreement No. 786758), and Austrian Science Fund (FWF) from project J 4464-N. NY is supported by the ISRAEL SCIENCE FOUNDATION (grant No. 1881/20).}
\address{Christoph Aistleitner: Institute of Analysis and Number Theory, TU Graz\\
Steyrergasse 30, 8010 Graz\\
Austria}
\email{aistleitner@math.tugraz.at}
\address{Simon Baker: School of Mathematics, University of Birmingham\\
Birmingham,  B15 2TT\\
UK}
\email{simonbaker412@gmail.com}
\address{Niclas Technau: School of Mathematical Sciences, Tel Aviv University\\
Tel Aviv 69978\\
Israel; \\
Department of Mathematics\\
University of Wisconsin--Madison\\
480 Lincoln Dr, Madison\\
WI-53706\\
USA}
\email{niclast@mail.tau.ac.il; technau@wisc.edu}
\address{Nadav Yesha: Department of Mathematics, University of Haifa\\
Haifa 3498838\\
Israel}
\email{nyesha@univ.haifa.ac.il}

\maketitle
\global\long\def\J{\mathcal{J}}%

\global\long\def\C{\mathcal{C}}%

\global\long\def\Lon{L_{1}}%

\global\long\def\Ltw{L_{2}}%

\global\long\def\Lth{L_{3}}%

\global\long\def\Lfo{L_{4}}%

\global\long\def\Lgi{L_{i}}%

\global\long\def\yon{y_{1}}%

\global\long\def\ytw{y_{2}}%

\global\long\def\yth{y_{3}}%

\global\long\def\yfo{y_{4}}%

\global\long\def\ygi{y_{i}}%

\global\long\def\Van{\mathrm{Van}}%

\global\long\def\A{A}%

\global\long\def\calS{\mathcal{S}}%

\global\long\def\la{\lambda}%

\global\long\def\y{y}%

\global\long\def\bfx{\mathbf{x}}%

\global\long\def\bfy{\mathbf{y}}%

\global\long\def\bfn{\mathbf{n}}%

\global\long\def\bfm{\mathbf{m}}%

\global\long\def\bfw{\mathbf{w}}%

\global\long\def\bfu{\mathbf{u}}%

\global\long\def\bfv{\mathbf{v}}%

\global\long\def\bfh{\mathbf{h}}%

\global\long\def\bfz{\mathbf{z}}%

\global\long\def\bfugi{u_{i}}%

\global\long\def\bft{\mathbf{t}}%

\global\long\def\bftau{\boldsymbol{\tau}}%

\global\long\def\bfPhi{\boldsymbol{\Phi}}%

\global\long\def\eqdef{\overset{\mathrm{def}}{=}}%

\global\long\def\bE{\mathbb{E}}%

\global\long\def\O{O}%

\global\long\def\E{\mathcal{E}}%

\global\long\def\V{\mathcal{V}}%

\global\long\def\L{\mathcal{L}}%

\global\long\def\I{\mathcal{I}}%

\global\long\def\calBN{\mathcal{B}_{k}}%

\global\long\def\calBNj{\mathcal{B}_{k,j}}%

\global\long\def\calBNp{\mathcal{N}_{k-1}^{\varepsilon}}%

\global\long\def\J{\mathcal{J}}%

\global\long\def\C{\mathcal{C}}%

\global\long\def\Lon{L_{1}}%

\global\long\def\Ltw{L_{2}}%

\global\long\def\Lth{L_{3}}%

\global\long\def\Lfo{L_{4}}%

\global\long\def\Lgi{L_{i}}%

\global\long\def\yon{y_{1}}%

\global\long\def\ytw{y_{2}}%

\global\long\def\yth{y_{3}}%

\global\long\def\yfo{y_{4}}%

\global\long\def\ygi{y_{i}}%

\global\long\def\Van{\mathrm{Van}}%

\global\long\def\A{A}%

\global\long\def\calS{\mathcal{S}}%

\global\long\def\la{\lambda}%

\global\long\def\y{y}%

\global\long\def\bfx{\mathbf{x}}%

\global\long\def\bfy{\mathbf{y}}%

\global\long\def\bfn{\mathbf{n}}%

\global\long\def\bfm{\mathbf{m}}%

\global\long\def\bfw{\mathbf{w}}%

\global\long\def\bfu{\mathbf{u}}%

\global\long\def\bfz{\mathbf{z}}%

\global\long\def\bfugi{u_{i}}%

\global\long\def\bft{\mathbf{t}}%

\global\long\def\bftau{\boldsymbol{\tau}}%

\global\long\def\bfPhi{\boldsymbol{\Phi}}%

\global\long\def\eqdef{\overset{\mathrm{def}}{=}}%

\global\long\def\bE{\mathbb{E}}%

\global\long\def\O{O}%

\global\long\def\E{\mathcal{E}}%

\global\long\def\V{\mathcal{V}}%

\global\long\def\L{\mathcal{L}}%

\global\long\def\I{\mathcal{I}}%

\global\long\def\calBN{\mathcal{B}_{k}}%

\global\long\def\calBNj{\mathcal{B}_{k,j}}%

\global\long\def\calBNp{\mathcal{N}_{k-1}^{\varepsilon}}%

\section{Introduction}

A sequence $(\vartheta_{n})_{n \geq1}\subseteq\left[0,1\right)$
is called uniformly distributed (or equidistributed) if each test interval $I\subseteq\left[0,1\right)$
contains asymptotically its ``fair share'' of points, that is, $(\vartheta_{n})_{n \geq1}$ is equidistributed when
\[
\frac{\#\left\{ n\leq N:\vartheta_{n}\in I\right\} }{N}\underset{N\rightarrow\infty}{\longrightarrow}\lambda\left(I\right)
\]
for all intervals $I \subseteq\left[0,1\right)$, where $\lambda$ denotes the Lebesgue measure. A sequence $(\vartheta_{n})_{n \geq1}$ of numbers in $\mathbb{R}$ is called uniformly distributed modulo one if the sequence of fractional parts $(\{\vartheta_n\})_{n \geq1}$ is uniformly distributed in $[0,1)$. The
classical theory of uniform distribution modulo one dates back to the early twentieth century, when
Weyl \cite{Weyl} laid its foundations in his famous paper of 1916.

One of the basic results in the area is Koksma's equidistribution theorem \cite{Koksma}, which states
that for $\lambda$-almost every $\alpha>1$, the sequence corresponding to the geometric progression $(\alpha^{n})_{n \geq1}$ is uniformly
distributed modulo one.
Such sequences with a ``typical'' value of $\alpha$ have been famously proposed by Knuth in his monograph \emph{The art of computer programming} \cite{Knuth} as examples of sequences showing strong pseudorandomness properties. Koksma's equidistribution theorem has been extended to so-called complete uniform distribution by Niederreiter and Tichy \cite{NT}, and quantitative equidistribution estimates were obtained in \cite{A}.  A version of Koksma's equidistribution theorem for self-similar measures was proved in \cite{Ba}. Describing the behaviour of $(\alpha^n)_{n\geq 1}$ for specific values of $\alpha$ is a challenging problem. 
A well known and open problem due to Mahler 
asks for the range of $(\{\xi (3/2)^n \})_{n \geq1}$,
where $\xi>0$ is a real parameter.
For more on this topic, and the study of the sequence $(\alpha^n)_{n\geq 1}$ modulo one, we refer the reader to \cite{B,D1,D2,FLP} and the references therein.

While the classical notion of equidistribution modulo one addresses the ``large-scale'' behaviour of the fractional parts of a sequence (counting the number of points in \emph{fixed} intervals), the study of the \emph{fine-scale} statistics
of sequences modulo one, i.e. statistics on the scale of the mean gap $ 1/N $, has attracted growing attention in recent years. Among the most popular fine-scale statistics are the $k$-point correlations and the nearest-neighbour gap distribution, which are defined as follows.

Let $\vartheta=(\vartheta_{n})_{n\geq1} \subseteq\mathbb{R}$ be a sequence, and let $k \geq 2$ be an integer.
Let $\calBN = \calBN(N)$ denote the set of integer $k$-tuples $(x_1, \dots, x_k)$ such that all components are in the range $\{1, \dots, N\}$ and no two components are equal.
For a compactly supported function $ f:\mathbb{R}^{k-1}\to\mathbb{R} $, the $k$-point correlation sum $R_{k}\left(f,\vartheta,N\right)$ is defined to be
\begin{equation} \label{rk_def}
R_{k}\left(f,\vartheta,N\right)\eqdef\frac{1}{N}\sum_{\bfx\in\calBN}\sum_{\bfm\in\mathbb{Z}^{k-1}}f\left(N\left(\Delta\left(\bfx,\vartheta\right)-\bfm\right) \right)
\end{equation}
where $\Delta\left(\bfx,\vartheta\right)$ denotes the difference vector
\begin{equation} \label{delta_def}
\Delta\left(\bfx,\vartheta\right)=\left(\vartheta_{x_{1}}-\vartheta_{x_{2}},\vartheta_{x_{2}}-\vartheta_{x_{3}},\ldots,\vartheta_{x_{k-1}}-\vartheta_{x_{k}}\right)\in\mathbb{R}^{k-1}.
\end{equation}
Let $ C_c^\infty(\mathbb{R}^{k-1}) $ denote the space of real-valued, smooth, compactly supported functions on $ \mathbb{R}^{k-1} $. If \[ \lim\limits_{N\to \infty}R_{k}\left(f,\vartheta,N\right) = \int_{\mathbb{R}^{k-1}}f(\bfx)~\textup{d}\bfx \]for all $ f\in C_c^\infty(\mathbb{R}^{k-1}) $  (equivalently, if $R_{k}\left(1_{\Pi},\vartheta,N\right) \to \textup{vol}(\Pi)$ as $N \to \infty$ for all axis-parallel boxes $\Pi$, where $ 1_\Pi $ is the indicator function of $ \Pi $), then we say that the $k$-point correlation of $ (\{\vartheta_{n}\})_{n\geq1} $ is ``Poissonian''. This notion alludes to the fact that such behaviour is in accordance with the (almost sure) behaviour of a Poisson process with intensity one.

To define the distribution of the so-called level spacings or nearest-neighbour gaps, i.e. gaps between consecutive elements of $ (\{\vartheta_{n}\})_{n\geq1} $, we need to consider the \emph{reordered} elements
$$
\vartheta_{(1)}^{N} \leq \vartheta_{(2)}^N \leq \dots \leq \vartheta_{(N)}^N \leq \vartheta_{(N+1)}^N,
$$
which we obtain as a reordering of $\{\vartheta_1\}, \dots, \{\vartheta_{N+1}\}$. Assume that the limit $N \to \infty$ of the function
$$
G(s,\vartheta,N) = \frac{1}{N} \# \left\{ n \leq N:~N \left( \vartheta_{(n+1)}^N - \vartheta_{(n)}^N \right)  \leq s \right\}
$$
exists for all $s \geq 0$. Then the limit function $G(s)$ is called the asymptotic distribution function of the level spacings (or, alternatively, of the nearest-neighbour gaps) of $ (\{\vartheta_{n}\})_{n\geq1} $. We say that the level spacings are {\em Poissonian} when $G(s) = 1 - e^{-s}$, which is in agreement with the well-known fact that the waiting times in the Poisson process are exponentially distributed.

The $k$-point correlation of order $k= 2, 3, \dots $, is also called the pair correlation, triple correlation, etc.\ Poissonian behaviour of these local statistics can be seen as a pseudorandomness property, since a sequence $X_1, X_2, \dots$ of independent, identically distributed random variables with uniform distribution on $[0,1)$ will almost surely have Poissonian correlations/gap distributions. Note that equidistribution is also traditionally seen as a pseudorandomness property, albeit on a ``global'' rather than on a ``local'' level.

Recently, the first two authors of the present paper proved that $(\{\alpha^n\})_{n\ge1}$ has Poissonian pair correlation for almost all $\alpha > 1$; see \cite{AB}. This is a refinement of Koksma's equidistribution theorem mentioned earlier, since it is known that a sequence with Poissonian pair correlations is necessarily equidistributed \cite{ALP,GL,mark}. In \cite{AB} it was conjectured that for almost all $ \alpha>1$, the $k$-point correlation of $(\{\alpha^n\})_{n\ge1}$ should also be Poissonian for all $k\ge2$, and that as a consequence the level spacings of $(\{\alpha^n\})_{n\ge1}$ are Poissonian as well. The main purpose of the present paper is to prove this conjecture.

\begin{thm}\label{thm: AB conjec is true}
For almost every $\alpha > 1$, the $k$-point correlation of $(\{\alpha^n\})_{n\ge1}$ is Poissonian for all $k \geq 2$.
\end{thm}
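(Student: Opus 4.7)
The strategy is the classical second moment method, combined with a Borel--Cantelli argument, with the novelty of the paper concentrated in the oscillatory integral estimates used to bound the variance. Fix $k \geq 2$, a bounded interval $A = [a,b] \subset (1,\infty)$, and a test function $f \in C_c^\infty(\mathbb{R}^{k-1})$; write $\vartheta_n = \alpha^n$. It suffices to show that $R_k(f,\vartheta,N) \to \int_{\mathbb{R}^{k-1}} f$ for almost every $\alpha \in A$: the theorem then follows by exhausting $(1,\infty)$ by countably many such $A$, by taking a countable dense family of test functions (which is sufficient thanks to the indicator-of-box equivalence recalled in the introduction), and by a union over $k$.

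The first move is Poisson summation in the $\bfm$-variable, converting the inner periodised sum into
\[
\sum_{\bfm \in \mathbb{Z}^{k-1}} f\bigl(N(\Delta(\bfx,\vartheta)-\bfm)\bigr) = N^{-(k-1)} \sum_{\bfh \in \mathbb{Z}^{k-1}} \widehat{f}(\bfh/N) \, e^{2\pi i \bfh \cdot \Delta(\bfx,\vartheta)}.
\]
The frequency-zero term produces $|\calBN|/N^k \cdot \widehat{f}(0) \to \int f$, so what remains is to prove that the oscillatory tail
\[
E(\alpha,N) \eqdef \frac{1}{N^k}\sum_{\bfh \neq \mathbf{0}} \widehat{f}(\bfh/N) \sum_{\bfx \in \calBN} e^{2\pi i \bfh \cdot \Delta(\bfx,\vartheta)}
\]
tends to zero almost surely. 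Crucially, $\bfh \cdot \Delta(\bfx,\vartheta) = \sum_{j=1}^{k} c_j(\bfh)\, \alpha^{x_j}$, where $c_j(\bfh) = h_j - h_{j-1}$ (with $h_0 = h_k = 0$); these integer coefficients are not all zero whenever $\bfh \neq \mathbf{0}$. Since $f \in C_c^\infty$, the weights $\widehat{f}(\bfh/N)$ decay faster than any polynomial for $|\bfh| \gg N$, so the $\bfh$-sum is effectively truncated.

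The heart of the argument, and the expected main obstacle, is to establish a variance bound $\int_A |E(\alpha,N)|^2\, d\alpha \ll_{k,f} N^{-\eta}$ for some $\eta > 0$. Expanding the square and interchanging summation with integration, the problem reduces to controlling oscillatory integrals of the shape
\[
\int_A \exp\!\Bigl( 2\pi i \sum_{i} d_i \alpha^{y_i}\Bigr) d\alpha,
\]
indexed by quadruples $(\bfh,\bfx,\bfh',\bfx')$, where the $y_i$ are the distinct exponents occurring among $\bfx$ and $\bfx'$ and the $d_i \in \mathbb{Z}$ aggregate the corresponding coefficients from the two tuples, possibly with cancellation. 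If the largest exponent $y_{\max}$ carries a nonzero coefficient, the derivative of the phase is of size $\asymp a^{y_{\max}}$, and the van der Corput first derivative test (equivalently, iterated integration by parts) delivers an essentially $a^{-y_{\max}}$ bound -- an exponentially strong gain. The difficulty is the catalogue of \emph{resonant} quadruples, where the top exponents of $\bfx$ and $\bfx'$ pair up with matching coefficients so that the leading phase term cancels; one must then iterate the oscillatory bound at the next surviving exponent. Controlling the combinatorial weight of these resonances -- which grows quickly with $k$ -- is the technical heart of the argument, and is precisely where the shift from the martingale approach of \cite{AB} to an analytic oscillatory-integral approach pays off, since the phase structure is visible in a single, global integral in $\alpha$.

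Given such a variance estimate, Chebyshev's inequality combined with Borel--Cantelli yields $E(\alpha,N_r) \to 0$ almost surely along a polynomially spaced subsequence $N_r$. A maximal-oscillation step -- bounding $\max_{N_r \le N < N_{r+1}} |R_k(f,\vartheta,N) - R_k(f,\vartheta,N_r)|$ via a further second moment on the increments, or sandwiching $R_k$ values of smoothed approximations of box indicators -- promotes the conclusion to all $N$. The theorem then follows by the countable exhaustion indicated at the outset.
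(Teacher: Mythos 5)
Your outline captures the high-level architecture correctly --- Poisson summation, a variance bound over $\alpha$ in a bounded interval, Borel--Cantelli along a sparse subsequence, and a sandwiching step to promote to all $N$ --- and this matches the paper's strategy. (The paper actually derives the $\alpha^n$ result from a more general statement about $(\{e^{\alpha a_n}\})_{n\geq 1}$ and reduces to Theorem~\ref{thm: AB conjec is true} by the change of variable $\alpha\mapsto e^\alpha$, but that is cosmetic.) The step you correctly identify as ``the heart of the argument'' is, however, not correctly handled, and there is a real gap.

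You claim that if the largest exponent $y_{\max}$ in the phase $\sum_i d_i\alpha^{y_i}$ carries a nonzero integer coefficient, then the first derivative is of size $\asymp a^{y_{\max}}$ throughout $[a,b]$, so that a first-derivative van der Corput estimate applies. This is false. The exponents $y_i$ can be consecutive integers and the coefficients $d_i$ can be as large as $N^{1+\varepsilon}$, so lower-order terms of $\phi'(\alpha)=\sum_i d_i y_i \alpha^{y_i-1}$ are not dominated by the leading one, and $\phi'$ can vanish inside the interval. Your subsequent remark about ``iterating at the next surviving exponent'' does not cure this, because it addresses only the case where the top \emph{coefficient} vanishes identically after combining the two copies of the correlation sum, not the case where the top coefficient is nonzero but $\phi'$ still has a zero.

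The paper's resolution is a quantitative \emph{repulsion principle} (Lemma~\ref{lem: repulsion principle}): for every $\alpha$ in the interval, some derivative $\phi^{(j)}(\alpha)$ with $1\le j\le 2k$ is large (in fact $\gg t_\ell^\eta$ for any fixed $\eta$). This is proved by writing the vector $(\phi^{(1)}(\alpha),\ldots,\phi^{(\ell)}(\alpha))$ as $M\mathbf{w}$, where $M=M(a_{t_1},\ldots,a_{t_\ell})$ is a scaled Vandermonde matrix and $\mathbf{w}=(u_ie^{\alpha a_{t_i}})_i$, and then inverting $M$ with an explicit operator-norm bound controlled by the gaps $a_{t_{i+1}}-a_{t_i}$ (Lemma~\ref{lem: spectral norm bound}). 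One then invokes a van der Corput lemma of order $\ell$ (Lemma~\ref{lem: modified Van der Corput}), whose hypotheses require a bound on the number of zeros of $\phi^{(\ell)}$, supplied by a Descartes-type lemma for exponential sums (Lemma~\ref{lem: bound number of zeros of phase function}). This package --- Vandermonde inversion, zero-counting, and higher-order van der Corput --- is the essential new content of the proof and is entirely absent from your proposal; the first-derivative claim you substitute for it does not hold. You also underdescribe the combinatorial bookkeeping for degenerate configurations (terms in the phase vanishing identically): the paper's Proposition~\ref{prop:comb_prop} shows that each such constraint removes exactly one ``free'' integer variable from the count, so that the degenerate contributions are still $O(N^{(2k-1)(1+\varepsilon)})$; this balance is what makes the final bound $V_k = O(N^{-1+(2k-1)\varepsilon})$ close.
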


It is known that if the $k$-point correlation is Poissonian for all $k \geq 2$, then the level spacings are also Poissonian (see Appendix A of \cite{KR}). Thus as a direct consequence of Theorem \ref{thm: AB conjec is true} we obtain that for almost all $\alpha > 1$, the level spacings of $(\{\alpha^n\})_{n\ge1}$ are Poissonian. The same principle applies to other ordered statistics, such as the second-to-nearest neighbour gaps etc.; whenever all $k$-point correlations are Poissonian, then these ordered statistics also behave in accordance with the Poissonian model.

We deduce Theorem \ref{thm: AB conjec is true} from a more general result which, due to the robustness of our method, comes at essentially no extra cost. This more general result is the following.
\begin{thm}
\label{thm: main}Let $\left(a_{n}\right)_{n\geq1}$ be an increasing
sequence of positive real numbers such that
\begin{equation}
\lim_{n\rightarrow\infty}\frac{a_{n}}{\log n}=\infty, \label{eq: growth assumptions1}
\end{equation}
and such that
\begin{equation}
a_{n+1}-a_{n} \geq n^{-C}\label{eq: growth assumptions2}
\end{equation}
for some $C>0$ for all sufficiently large $n$. Then for almost every $\alpha>0$, the $k$-point correlation of the sequence
$ (\{e^{\alpha a_{n}}\})_{n\geq1} $
is Poissonian for all $k \geq 2$.
\end{thm}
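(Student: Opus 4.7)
The plan is to deduce Theorem~\ref{thm: main} via the second-moment / variance method for Borel--Cantelli convergence, with the heart of the argument being an oscillatory integral estimate in the parameter $\alpha$. By a standard countability/approximation reduction it suffices to prove, for each fixed $f\in C_c^{\infty}(\mathbb{R}^{k-1})$ and each compact interval $[A_1,A_2]\subset(0,\infty)$, that $R_k(f,\vartheta(\alpha),N)\to\int f$ for almost every $\alpha\in[A_1,A_2]$, where $\vartheta_n(\alpha)=e^{\alpha a_n}$.

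First I would apply Poisson summation in $\bfm\in\mathbb{Z}^{k-1}$ to recast
\[
R_k(f,\vartheta,N) \;=\; \frac{1}{N^{k}}\sum_{\bfh\in\mathbb{Z}^{k-1}} \widehat{f}(\bfh/N)\, \sum_{\bfx\in\calBN} e\!\left(\langle\bfh,\Delta(\bfx,\vartheta)\rangle\right),
\]
with $e(t)=e^{2\pi i t}$. The frequency $\bfh=\mathbf{0}$ yields the main term $|\calBN|/N^{k}\cdot\widehat{f}(0) = \int f + o(1)$. Rapid decay of $\widehat{f}$ restricts the remaining sum effectively to $\|\bfh\|_\infty \ll N^{1+\varepsilon}$. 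Rewriting the inner phase as $\langle\bfh,\Delta(\bfx,\vartheta)\rangle=\sum_{i=1}^{k} c_i(\bfh)\, e^{\alpha a_{x_i}}$, I would then estimate
\[
\E_{\alpha}(\bfh) \;\eqdef\; \int_{A_1}^{A_2}\biggl|\sum_{\bfx\in\calBN}e\Bigl(\sum_{i}c_i(\bfh)\,e^{\alpha a_{x_i}}\Bigr)\biggr|^{2}\,d\alpha \;=\; \sum_{\bfx,\bfx'}\int_{A_1}^{A_2} e(\Phi_{\bfx,\bfx',\bfh}(\alpha))\,d\alpha.
\]

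The core analytic input is a non-stationary phase bound for these inner integrals. Since $\Phi'(\alpha)=\sum_i c_i a_{x_i}e^{\alpha a_{x_i}}-\sum_i c_i a_{x'_i}e^{\alpha a_{x'_i}}$ is dominated by the single exponential with the largest exponent $\alpha a_{\max}$, repeated integration by parts produces a bound of the form $\ll_L (1+|c_{\max}|\, a_{\max}\,e^{A_1 a_{\max}})^{-L}$, unless the leading exponentials cancel between the $\bfx$- and $\bfx'$-sums. An iterative peeling argument then matches off exponentials from the largest index downward; pairs with a genuine cancellation at all levels form a near-diagonal set whose cardinality is $\ll N^{k}(\log N)^{O(1)}$, and the near-miss pairs can be counted quantitatively using the separation hypothesis $a_{n+1}-a_n\geq n^{-C}$ (which forces the derivative $\Phi'(\alpha)$ to be large outside an $\alpha$-set of controllably small measure). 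The growth hypothesis $a_n/\log n\to\infty$ is what ensures that the resonant regime $a_{\max}(\bfx)\ll\log N$ captures only negligibly many tuples, so that the oscillatory gains are not drowned out by the combinatorics of matching indices across $k$ coordinates.

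Combining these ingredients, I expect a variance estimate $\int_{A_1}^{A_2}|R_k(f,\vartheta(\alpha),N)-\int f|^{2}\,d\alpha \;=\; O(N^{-\delta})$ for some $\delta>0$ depending on $f,k,C$. A Borel--Cantelli argument along a polynomially-spaced subsequence $N_\ell=\lfloor\ell^{2/\delta}\rfloor$ then yields almost-sure convergence along $(N_\ell)$, and interpolation between consecutive $N_\ell$ is handled by a routine monotonicity/approximation step using smooth envelopes for $f$. The main obstacle, and the place where one must be careful, is the iterative peeling for $k\geq 3$: several coordinates of $\bfh$ may conspire to create partial cancellations in $\Phi$ that do not come from a coordinate-wise matching of $\bfx$ and $\bfx'$, and organizing the combinatorics of these ``higher-order resonances'' while keeping the exponential gain from non-stationary phase is the delicate part of the argument.
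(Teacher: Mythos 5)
Your overall framework --- restricting $\alpha$ to a compact interval, applying Poisson summation to $R_k$, establishing an $L^2$ (variance) estimate, and concluding by Borel--Cantelli along a polynomial subsequence with a sandwiching step --- does match the paper's strategy (the last step is precisely Proposition~\ref{prop:L2->a.s.}). But there is a genuine gap in the core oscillatory-integral estimate. You assert that $\Phi'(\alpha)$ ``is dominated by the single exponential with the largest exponent'' and proceed by repeated integration by parts (non-stationary phase). Under the hypotheses of the theorem this domination fails: the separation condition $a_{n+1}-a_n\geq n^{-C}$ guarantees only a polynomially small gap between consecutive exponents, so the two largest terms of $\Phi'$ differ multiplicatively by $e^{\alpha(a_{t_\ell}-a_{t_{\ell-1}})}=1+O(\alpha N^{-C})$. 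They can therefore be of the same order and, when their coefficients have opposite signs, cancel; $\Phi'$ genuinely vanishes at points of $\J$, and near such points no amount of integration by parts against $\Phi'$ alone yields decay. Your fallback --- excising the $\alpha$-set where $\Phi'$ is small and bounding its measure via the spacing hypothesis --- does not close the gap, because controlling the measure of that sublevel set requires knowing that some \emph{higher} derivative is bounded away from zero, which is exactly the information your argument has not established.

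The paper resolves this by proving a \emph{uniform} lower bound on $\max_{1\le i\le\ell}|\phi^{(i)}(\alpha)|$ over all of $\J$ (the repulsion principle, Lemma~\ref{lem: repulsion principle}), via the Vandermonde-type identity $\bftau=M\bfw$ and an explicit bound on $\left\Vert M^{-1}\right\Vert_\infty$ (Lemma~\ref{lem: spectral norm bound}), and then applies a higher-derivative Van der Corput estimate (Lemma~\ref{lem: modified Van der Corput}), whose hypotheses in turn need a bound on the number of zeros of derivatives of exponential polynomials (Lemma~\ref{lem: bound number of zeros of phase function}). In that scheme the growth hypothesis~\eqref{eq: growth assumptions1} does not serve to ``make the resonant tuple count negligible'' as you propose, but to make the resulting lower bound $\lambda\gg t_\ell^{\eta}$ superpolynomial for arbitrary $\eta$, so that even the weak $\lambda^{-1/\ell}$ gain from Van der Corput suffices. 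Finally, your near-diagonal cardinality $\ll N^{k}(\log N)^{O(1)}$ is off: the degenerate configurations --- where some terms of $\phi$ vanish identically because $x_i=y_j$ with matching coefficients, or because a coefficient is zero --- contribute on the order of $N^{(2k-1)(1+\varepsilon)}$, and the required bookkeeping is more intricate than ``iterative peeling''; it is carried out systematically in Proposition~\ref{prop:comb_prop} via a partition into index sets.
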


Theorem \ref{thm: AB conjec is true} follows upon letting $ a_n = n $ in Theorem \ref{thm: main}, and observing that the map $ \alpha \mapsto e^\alpha $ from $ (0,\infty) $ to $ (1,\infty) $ preserves measure zero sets.
We remark that in Theorem \ref{thm: main}, the sequence $(e^{\alpha a_{n}})_{n\geq1}$ can very well be a sequence of
sub-exponential growth, that is
\[
\lim_{n\rightarrow\infty}\frac{e^{\alpha a_{n+1}}}{e^{\alpha a_{n}}}=1,
\]
and still have a gap statistic following the Poissonian model for almost all $\alpha>0$, e.g. by taking $ a_n = \sqrt{n} $, $ a_n=(\log (n+1))^2 $, $a_n= 
(\log \log (n + 2)) \log (n+1)$ etc.

To put our results into perspective, we mention some earlier related results. Fundamental work on the correlations of sequences in the unit interval was carried out by Rudnick, Sarnak and Zaharescu; see for example \cite{RS,RSZ,RZ}. As a general principle, proving Poissonian behaviour of the $k$-point correlation of a sequence becomes increasingly difficult when $k$ becomes large. For example, it is known \cite{RS} that $(\{n^2 \alpha\})_{n\ge1}$ has Poissonian pair correlation for almost all $\alpha$; the same is conjectured to be true for the triple correlation (and probably all higher correlations), but only partial results exist in this direction \cite{TW}. Rudnick and Zaharescu proved \cite{RZ2} that for a lacunary sequence of integers $(a_n)_{n\ge1}$, i.e.\ a sequence satisfying $\liminf\limits_{n\to \infty}\frac{a_{n+1}}{a_n} > 1$, for almost all $\alpha$, the $k$-point correlation of $(\{a_n \alpha\})_{n\ge1}$ is Poissonian for all $k \geq 2$. Recently, the last two authors of the present paper proved \cite{YT} that for every $k \geq 2$, the $k$-point correlation of $(\{n^\alpha\})_{n\ge1}$ is Poissonian for almost all $\alpha > 4k^2 -4k -1$; in the notation of Theorem \ref{thm: main} this corresponds to $ a_n = \log n $, so that assumption \eqref{eq: growth assumptions1} fails to hold. Results of a non-metric nature are particularly sparse. A marked exception is the sequence $(\{ \sqrt{n} \})_{n\ge1}$, for which the level spacings distribution is \emph{not} Poissonian, as was shown by Elkies and McMullen using methods from ergodic theory \cite {EM}; somewhat surprisingly, the pair correlation is, in fact, Poissonian \cite{EMV}. It is conjectured that the sequence $(\{n^2 \alpha\})_{n\ge1}$ has Poissonian pair correlation for every $\alpha$ which cannot be too well approximated by rational numbers, but again only partial results exist \cite{HB,TD}.

\section{Outline of the argument}

For the remaining part of this manuscript, we will only be dealing with the sequence $\vartheta\left(\alpha\right)=\left(e^{\alpha a_{n}}\right)_{n\geq1}$; we shall simply write $\Delta\left(\bfx,\vartheta\right)$ instead of $\Delta\left(\bfx,\vartheta(\alpha)\right)$, and $R_{k}\left(f,\alpha,N\right)$ instead of $R_{k}\left(f,\vartheta(\alpha),N\right)$.

The strategy to prove Theorem \ref{thm: main} is much in the spirit of \cite{YT} and will now be detailed. To begin
with, we restrict our attention to intervals of the special form
\begin{equation}
\J=\J\left(\A\right)\eqdef\left[A,A+1\right],\qquad\left(A>0\right)\label{def: J}
\end{equation}
which will remain fixed throughout the proof. It is certainly enough to demonstrate
that for each $k\ge2$ and $A>0$ the assertion of Theorem \ref{thm: main} holds,
for almost every $\alpha\in\J$. Note that $\J$, equipped with Borel sets and Lebesgue measure, forms a probability space, so it is natural to speak about expectations, variances, etc., of real-valued functions defined on $\J$.

To prove Theorem \ref{thm: main},
we show via
a variance estimate that $R_{k}$ concentrates around its mean value $\int_{-\infty}^{\infty}f\left(\bfx\right)\:\mathrm{d}\bfx$ with a reasonable error term; Theorem \ref{thm: main} will then follow from a routine argument (see \cite[Proposition 7.1]{YT}).
Using Poisson summation, we can phrase the variance estimate in
terms of oscillatory integrals of the form:
\begin{equation} \label{I_form}
I\left(\bfu,\bft\right)\eqdef\int_{\J}e\left(\phi\left(\bfu,\bft,\alpha\right)\right)\,\mathrm{d}\alpha
\end{equation}
where $ e(z)\eqdef e^{2\pi i z}$, and the \emph{phase function} $\phi$ is given by
\begin{equation}
	\label{eq:phi_def}
\phi\left(\bfu,\bft,\alpha\right)\eqdef\sum_{i \leq 2k} u_{i} e^{\alpha a_{t_{i}}}\qquad\bfu=\left(u_{1},\ldots,u_{2k}\right),\quad\bft=\left(t_{1},\ldots,t_{2k}\right).
\end{equation}
There are additional constraints on the integer vectors $\bfu, \bft$
which naturally arise from the analysis. More precisely, fixing $ \varepsilon>0 $, we will have $ \bfu = (\bfv,\bfw) $, $ \bft = (\bfx, \bfy) $ where $ \bfx, \bfy \in \calBN $, and $ \bfv,\bfw \in \mathcal{U}_{k}^{\varepsilon}$, where
\begin{equation}
\label{eq:Uke}
\mathcal{U}_{k}^{\varepsilon}
=\mathcal{U}_{k}^{\varepsilon}\left(N\right)
\eqdef \{ {\bf u}=\left(u_{1},\dots,u_{k}\right)
\in\mathbb{Z}^{k}:\,1\le\left\Vert 
\bfu\right\Vert _{\infty}\le2N^{1+\varepsilon},
\,u_{1}+\dots+u_{k}=0
\}.
\end{equation}
Our desired variance estimate
can, after a simple computation, be phrased as a bound for an average
of these integrals. It will then be shown
that
\begin{equation}
	\label{eq:Vk}
	V_{k}\left(N,\J,\varepsilon\right)\eqdef\frac{1}{N^{2k}}\sum_{\bfz=\left({\bf u},\bft\right)\in\left(\mathcal{U}_{k}^{\varepsilon}\right)^{2}\times\mathcal{B}_{k}^{2}}\left|I\left({\bf u},{\bf t}\right)\right|=O(N^{-1+\varepsilon}).
\end{equation}
To prove such a bound, we use a variant of Van der Corput's lemma which
requires us to guarantee that at each point $\alpha\in\J$ at least
some derivative of $\phi$ with respect to $\alpha$ is large. Such a ``repulsion'' property is captured by the function
\begin{equation} \label{van_def}
\Van_{\ell}\phi\left(\alpha\right)\eqdef\max_{i \leq \ell}
|\phi^{(i)}(\alpha)|,
\end{equation}
and we shall derive an acceptable lower bound on $\Van_{\ell}\phi$,
uniformly throughout $\J$.

The aforementioned repulsion principle (see Lemma \ref{lem: repulsion principle} below) is the driving force behind the argument, and the only part of the proof where assumptions \eqref{eq: growth assumptions1} and \eqref{eq: growth assumptions2} are used. Moreover, this way of reasoning is  robust, and the arithmetic that we require
is quite simple and essentially just the structure of the
real numbers plus quantitative growth and spacing conditions.
A technical complication which often arises in the study of $ k $-point correlation sums (see e.g. \cite{YT,RZ2}) is that we have to deal with ``degenerate'' configurations where $ \bfu $ and $ \bft $ are such that some of the terms in the function $ \phi\left(\bfu,\bft,\alpha\right) $ vanish; this will be handled by a combinatorial argument (see Proposition \ref{prop:comb_prop} below).

\section{Preliminaries}

In this section we collect the tools that we will use later and introduce
further notation.

\subsection{Notation}

Throughout the rest of this manuscript the implied constants may depend on the sequence $(a_n)_{n \geq 1}$ from the statement of Theorem \ref{thm: main}, as well as on $k,f,\J,\varepsilon, \eta$
and we shall not indicate this dependence explicitly. The dependence
on any other parameter will be indicated. The Bachmann--Landau $\O$  symbol, or interchangeably the Vinogradov symbols $\ll$ and $\gg$, have their usual meaning. Throughout the manuscript, $k$ is a fixed integer satisfying $k \geq 2$.

\subsection{Oscillatory integrals}

The bulk of our work is concerned with understanding the magnitude
of the one-dimensional oscillatory integrals
\[
I\left(\phi,\J\right)\eqdef\int_{\J}e\left(\phi\left(\alpha\right)\right)\,\mathrm{d\alpha}
\]
where $\phi:\J\rightarrow\mathbb{R}$ is a $C^{\infty}$-function, the
so called \emph{phase function}. The phase functions we are required to
understand are of the shape $ \phi(\alpha)=\phi\left(\bfu,\bft,\alpha\right) $ as in \eqref{eq:phi_def}.
We need the following variant of Van der Corput's lemma:
\begin{lem}
\label{lem: modified Van der Corput}Let $\phi:\J\rightarrow\mathbb{R}$
be a $C^{\infty}$-function. Fix $\ell\geq1$, and suppose that $\phi^{\left(\ell\right)}\left(\alpha\right)$
has at most $C$ zeros, and that the inequality
$\mathrm{Van}_{\ell}\phi\left(\alpha\right)\ge\lambda>0$
holds throughout the interval $\J$. Then the bound
\[
I\left(\phi,\J\right)\ll_{\ell,C}\lambda^{-1/\ell}
\]
holds when $\ell\ge2$, or when $\ell=1$ and $\phi'$ is monotone on $\J$.
\end{lem}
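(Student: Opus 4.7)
The plan is to reduce to the classical form of Van der Corput's lemma by decomposing $\J$ into $O_{\ell,C}(1)$ sub-intervals on each of which a single derivative $\phi^{(i)}$ (with $1\le i\le\ell$) is uniformly bounded below in absolute value by $\lambda$. The first step is a zero-count: starting from the hypothesis that $\phi^{(\ell)}$ has at most $C$ zeros in $\J$, a downward induction via Rolle's theorem shows that $\phi^{(i)}$ has at most $C+\ell-i$ zeros in $\J$ for every $1\leq i\leq\ell$, since $m$ zeros of $\phi^{(i)}$ would force $\phi^{(i+1)}$ to have at least $m-1$. Subdividing $\J$ at the union of all these zeros produces $O_{\ell,C}(1)$ closed sub-intervals $J''$ on each of which every $\phi^{(i)}$ with $1\leq i\leq\ell$ has constant sign. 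In particular, for $i<\ell$ the derivative $\phi^{(i+1)}$ has constant sign on $J''$, so $\phi^{(i)}$ is monotone there, hence $|\phi^{(i)}|$ is monotone as well, and consequently the level set $E_{i}\cap J''\eqdef\{\alpha\in J'':|\phi^{(i)}(\alpha)|\geq\lambda\}$ is a single sub-interval for every $i<\ell$.

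On a fixed $J''$, the hypothesis $\Van_\ell\phi\geq\lambda$ gives $J''=\bigcup_{i=1}^{\ell}(E_i\cap J'')$, so set $F_i\eqdef (E_i\cap J'')\setminus\bigcup_{j<i}(E_j\cap J'')$ to obtain a disjoint decomposition. For $i<\ell$, removing the union of the $i-1$ intervals $E_j\cap J''$ ($j<i$) from the single interval $E_i\cap J''$ leaves at most $i$ intervals; for $i=\ell$, removing them from $J''$ itself leaves at most $\ell$ intervals. On every piece of $F_i$ one has $|\phi^{(i)}|\geq\lambda$, so the classical Van der Corput lemma of order $i$ applies and yields $O_i(\lambda^{-1/i})$: for $i\geq 2$ it applies directly, and for $i=1$ the required monotonicity of $\phi'$ is provided by the constant sign of $\phi''$ on $J''$ (or by the hypothesis of the lemma when $\ell=1$, in which case no subdivision is needed and $J''=\J$).

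Summing over the $O_{\ell,C}(1)$ sub-intervals and over $i\in\{1,\ldots,\ell\}$ gives
\[
|I(\phi,\J)|\ll_{\ell,C}\sum_{i=1}^{\ell}\lambda^{-1/i}.
\]
When $\lambda\geq 1$ each term satisfies $\lambda^{-1/i}\leq\lambda^{-1/\ell}$, and when $\lambda\leq 1$ the trivial bound $|I(\phi,\J)|\leq|\J|\leq\lambda^{-1/\ell}$ already suffices; in either regime the asserted estimate $|I(\phi,\J)|\ll_{\ell,C}\lambda^{-1/\ell}$ follows. The main obstacle in executing this plan is the Rolle-based zero count, since this is the ingredient that allows the number of sub-intervals in the decomposition, and the number of pieces of each $F_i$, to be controlled uniformly in $\phi$ and $\lambda$; once it is in place, the remainder of the argument is bookkeeping plus an appeal to the classical lemma.
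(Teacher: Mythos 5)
Your proof is correct. Note that the paper itself does not give an argument for this lemma: it simply cites \cite[Lemma~3.3]{YT}, so there is no in-paper proof to compare against. The argument you supply --- propagate the zero count downward via Rolle, cut $\J$ at the $O_{\ell,C}(1)$ zeros of $\phi^{(1)},\dots,\phi^{(\ell)}$ so that each derivative keeps a constant sign (hence $|\phi^{(i)}|$ is monotone and $E_i\cap J''$ is an interval for $i<\ell$), decompose each $J''$ disjointly into the $F_i$'s, apply classical Van der Corput of order $i$ on each of the $O_\ell(1)$ pieces, and finally use the $\lambda\lessgtr 1$ dichotomy and the trivial bound $|I|\le|\J|$ to collapse $\sum_i\lambda^{-1/i}$ to $\lambda^{-1/\ell}$ --- is the standard route to this statement and is almost certainly what the cited source does. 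Two small points you handle implicitly and correctly but which are worth keeping in mind: the identity $F_\ell=J''\setminus\bigcup_{j<\ell}(E_j\cap J'')$, needed since $E_\ell\cap J''$ need not be a single interval, holds because $\Van_\ell\phi\ge\lambda$ forces any $\alpha$ avoiding all $E_j$ with $j<\ell$ into $E_\ell$; and the Rolle induction is legitimate because $\phi^{(\ell)}$ having finitely many zeros precludes any lower-order derivative from vanishing on a sub-interval.
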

\begin{proof}
This can be found in \cite[Lemma 3.3]{YT}.
\end{proof}
Lemma \ref{lem: modified Van der Corput} requires a bound on the number of zeros for the derivatives of $\phi$. For this we prove the following  which is a very minor modification of \cite[Lemma 4.3]{YT}.
\begin{lem}\label{lem: bound number of zeros of phase function}
Let $\psi(\alpha)=\sum_{i\leq \ell}u_ie^{\alpha x_i}$ for $\bfu=(u_1,\ldots,u_\ell)\in \mathbb{R}^{\ell}_{\neq 0}$ and $\bfx=(x_1,\ldots,x_\ell)\in \mathbb{R}^\ell$ such that $x_1<\cdots <x_{\ell}$. Then $\psi$ has at most $\ell -1 $ zeros in $ \mathbb{R} $.
\end{lem}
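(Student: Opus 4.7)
My plan is to prove this by induction on $\ell$, using Rolle's theorem to reduce the number of exponential terms by one. The base case $\ell=1$ is trivial: $\psi(\alpha) = u_1 e^{\alpha x_1}$ with $u_1 \neq 0$ has no real zeros, which is at most $\ell - 1 = 0$.

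For the inductive step, I would factor out the lowest exponential. Define
\[
\tilde{\psi}(\alpha) \eqdef e^{-\alpha x_1}\psi(\alpha) = u_1 + \sum_{i=2}^{\ell} u_i e^{\alpha(x_i - x_1)}.
\]
Since $e^{-\alpha x_1}$ is strictly positive, $\tilde{\psi}$ has exactly the same real zeros as $\psi$. The key observation is that the derivative
\[
\tilde{\psi}'(\alpha) = \sum_{i=2}^{\ell} u_i (x_i - x_1) e^{\alpha(x_i - x_1)}
\]
is again an exponential polynomial of the form considered in the lemma: it has $\ell-1$ terms, with strictly increasing exponents $0 < x_2 - x_1 < \cdots < x_\ell - x_1$ and nonzero coefficients $u_i(x_i - x_1)$ (nonzero because $u_i \neq 0$ and $x_i > x_1$).

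By the inductive hypothesis, $\tilde{\psi}'$ has at most $\ell - 2$ real zeros. If $\tilde{\psi}$ (and hence $\psi$) had $\ell$ or more real zeros, then between any two consecutive ones Rolle's theorem would produce a zero of $\tilde{\psi}'$, yielding at least $\ell-1$ zeros of $\tilde{\psi}'$; this contradicts the inductive bound. Therefore $\psi$ has at most $\ell - 1$ real zeros, completing the induction.

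There is no real obstacle here beyond checking the bookkeeping: the crucial algebraic point is that dividing by $e^{\alpha x_1}$ preserves zeros while making one exponent equal to $0$, so that taking a derivative kills one term and leaves an exponential polynomial of strictly shorter length with the structural hypotheses of the lemma intact. Rolle's theorem then does all the work of converting ``one fewer zero of the derivative'' into ``one fewer zero of the function.''
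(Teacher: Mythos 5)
Your proof is correct and is essentially identical to the paper's: both proceed by induction on $\ell$, normalize one of the exponential terms to a constant so that differentiation drops that term, verify the remaining coefficients are nonzero, and then apply the inductive hypothesis together with Rolle's theorem. The only cosmetic difference is that you divide by $u_1 e^{\alpha x_1}$ (the smallest exponent) while the paper divides by $u_\ell e^{\alpha x_\ell}$ (the largest), which changes nothing of substance.
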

\begin{proof}
	
	We argue by induction on $\ell$.
	For $\ell=1$ the correctness of the
	statement is clear.
	Assume that the lemma is true for $\ell-1$ ($\ell\ge2)$,
	and let
	\[
	\psi\left(\alpha\right)=\sum_{i\leq \ell}u_{i}e^{\alpha x_i}.
	\]
	The zeros of $\psi$ are exactly the zeros of the function
	\[
	\tilde{\psi}\left(\alpha\right)=
	\sum_{i\leq \ell-1}\tilde{u}_{i} e^{\alpha \tilde{x}_i}+1,
	\]
	where $\tilde{u}_{i}=\frac{u_{i}}{u_{\ell}}$, and
	$\tilde{x}_{i}= x_{i}- x_{\ell}$
	($1\le i\le \ell-1$), since $\psi\left(\alpha\right)
	= u_{\ell} e^{\alpha x_{\ell}} \tilde{\psi}\left(\alpha\right)$.
	Moreover,
	\[
	\tilde{\psi}'\left(\alpha\right)=
	\sum_{i\leq \ell-1}v_{i} e^{\alpha \tilde{x}_i},
	\]
	where $v_{i}=\tilde{u}_{i} \tilde{x}_{i}$ ($1\le i\le \ell-1$).
	
	Clearly, the numbers $v_{1},\dots,v_{\ell-1}$ are nonzero,
	and the $\tilde{x}_{1},\dots,\tilde{x}_{\ell-1}$
	are distinct. Therefore, by the induction hypothesis, $\tilde{\psi}'$
	has at most $\ell-2$ zeros. Hence, by Rolle's theorem, $\tilde{\psi}$
	has at most $\ell-1$ zeros, completing the proof.
\end{proof}

\section{The repulsion principle}
\begin{lem}
\label{lem: spectral norm bound} Let $\ell$ be a positive integer. Let $\gamma > 0$. Let $0 < x_{1} < x_{2} < \ldots < x_{\ell}$ be real numbers
such that $x_{i+1}-x_{i}\geq \gamma$ for $1 \leq i \leq \ell-1$. Then the matrix
\begin{equation}
M=M\left(x_{1},\ldots,x_{\ell}\right)=\begin{pmatrix}x_{1} & \ldots & x_{\ell}\\
\vdots & \ddots & \vdots\\
x_{1}^{\ell} & \ldots & x_{\ell}^{\ell}
\end{pmatrix}\label{def: matrix}
\end{equation}
is invertible and the operator norm  $\left\Vert \cdot\right\Vert _{\infty}$ of its inverse satisfies
\[
\left\Vert M^{-1}\right\Vert _{\infty}\ll_{\ell} x_{\ell}^{\ell-1}  x_1^{-1} \left(\frac{1}{\gamma} \right)^{\ell-1}.
\]
\end{lem}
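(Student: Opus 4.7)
The plan is to factor $M$ as a product of a Vandermonde matrix and a diagonal matrix, and then invert using the Lagrange interpolation formula. Writing $x_j^i = x_j \cdot x_j^{i-1}$ gives the factorization $M = V D$, where $D = \mathrm{diag}(x_1,\dots,x_\ell)$ and $V$ is the classical $\ell \times \ell$ Vandermonde matrix at nodes $x_1,\dots,x_\ell$, i.e.\ $V_{ij} = x_j^{i-1}$. Since the spacing hypothesis forces the $x_j$ to be distinct and $x_1 > 0$ forces $D$ to be invertible, $M$ is invertible with $M^{-1} = D^{-1} V^{-1}$. Submultiplicativity of the operator norm yields
\[
\|M^{-1}\|_\infty \leq \|D^{-1}\|_\infty \, \|V^{-1}\|_\infty = x_1^{-1} \, \|V^{-1}\|_\infty,
\]
so it suffices to bound $\|V^{-1}\|_\infty$.

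The key step is to identify the rows of $V^{-1}$ with the coefficient vectors of the Lagrange polynomials
\[
L_i(x) = \prod_{k \neq i} \frac{x - x_k}{x_i - x_k}, \qquad i = 1,\dots,\ell,
\]
which follows at once from the defining property $L_i(x_j) = \delta_{ij}$ together with $\deg L_i < \ell$. Consequently $\|V^{-1}\|_\infty$ equals the maximum, over $i$, of the sum of absolute values of the coefficients of $L_i$, and it remains to estimate the numerator and denominator separately.

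For the denominator, the spacing hypothesis $x_{k+1} - x_k \geq \gamma$ implies by telescoping that $|x_i - x_k| \geq \gamma |i-k|$, hence
\[
\Bigl|\prod_{k \neq i}(x_i - x_k)\Bigr| \geq \gamma^{\ell-1} \prod_{k \neq i} |i-k| = \gamma^{\ell-1} (i-1)!(\ell-i)!.
\]
For the numerator, the coefficients of $\prod_{k \neq i}(x - x_k)$ are, up to sign, elementary symmetric functions in the positive numbers $\{x_k\}_{k \neq i}$, each bounded by $x_\ell$; summing absolute values therefore gives $\prod_{k \neq i}(1 + x_k) \leq (1 + x_\ell)^{\ell-1}$. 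Combining these two estimates produces $\|V^{-1}\|_\infty \ll_\ell (1 + x_\ell)^{\ell-1} \gamma^{-(\ell-1)}$, and hence
\[
\|M^{-1}\|_\infty \ll_\ell x_1^{-1} (1 + x_\ell)^{\ell-1} \gamma^{-(\ell-1)}.
\]

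The only nontrivial obstacle is the final bookkeeping: replacing $(1 + x_\ell)^{\ell-1}$ by $x_\ell^{\ell-1}$ to match the claimed bound. This is immediate when $x_\ell \geq 1$ — the regime of the intended application, where the nodes will be of the form $a_{t_i}$ with $a_n \to \infty$. In the remaining regime one can absorb the discrepancy into the implied $\ell$-dependent constant using the elementary inequality $x_\ell \geq (\ell-1)\gamma$ that follows from the spacing hypothesis, so that factors of $(1+x_\ell)/x_\ell$ are controlled by a function of $\ell$ and $\gamma$ alone. Beyond this cosmetic issue, the entire proof is short and rests on the explicit Lagrange formula.
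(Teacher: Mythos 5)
Your proof is correct and is essentially the same as the paper's. The paper quotes an explicit formula (citing an exercise in Knuth) for the entries $m_{ij}$ of $M^{-1}$ -- a ratio of an elementary symmetric polynomial in $\{x_m : m \neq i\}$ to $x_i \prod_{m \neq i}(x_m - x_i)$ -- and bounds each entry, then passes from the max-entry norm to the operator norm. You instead factor $M = VD$, apply submultiplicativity, and recover the same bound on $V^{-1}$ from the Lagrange interpolation formula; since that entry formula is exactly what Lagrange interpolation encodes, the two arguments are the same computation packaged slightly differently, and both bound the denominator by telescoping and the numerator via elementary symmetric functions.

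The point you flag at the end is a real one, and you are more careful about it than the paper is. The argument naturally produces $(1+x_\ell)^{\ell-1}$ rather than $x_\ell^{\ell-1}$, and the discrepancy cannot in general be absorbed into an $\ell$-dependent constant: already for $\ell=2$, taking $x_1=\epsilon$, $x_2=2\epsilon$, $\gamma=\epsilon$ gives $\|M^{-1}\|_\infty \asymp \epsilon^{-2}$ while $x_2\, x_1^{-1}\gamma^{-1} \asymp \epsilon^{-1}$, so the lemma as literally stated fails when the $x_i$ are small. The same gap is hidden in the paper's own bound on $|m_{ij}|$, which requires $x_\ell \gg_\ell 1$ to hold. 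Your proposed repair (using $x_\ell \geq (\ell-1)\gamma$) lets the implied constant depend on $\gamma$, which the statement does not permit, so it does not literally restore the claimed inequality either. None of this matters downstream: in the application the nodes are $a_{t_i}$ with $a_n \to \infty$ and the implied constants may depend on the sequence $(a_n)$, so the offending regime contributes only finitely many cases; but as a standalone lemma, it should either carry the extra factor $(1+x_\ell)^{\ell-1}$ or assume $x_\ell \geq 1$.
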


\begin{proof}
The conclusion is trivial when $\ell=1$, so we will assume that $\ell \geq 2$. The matrix $M$ is the transpose of a scaled Vandermonde matrix;
the entry $m_{ij}$ of its inverse $M^{-1}$ is given by (see, e.g. \cite[Ex. 40]{Knuth})
$$
m_{ij} = (-1)^{j-1}  \frac{\sum\limits_{\substack{1 \leq m_1 < \dots < m_{\ell-j} \leq \ell,\\ m_1, \dots, m_{\ell-j} \neq i}} x_{m_1} \cdots x_{m_{\ell-j}}}{x_i \prod\limits_{\substack{1 \leq m \leq \ell,\\m \neq i}} (x_m - x_i)}.
$$
Hence
\begin{equation}\label{size_elem}
|m_{ij}| \ll_\ell x_\ell^{\ell-1}  x_1^{-1} \left( \frac{1}{\gamma} \right)^{\ell-1}
\end{equation}
for all $1 \leq i,j \leq \ell$.
It is well-known that the maximum norm $\left\Vert \cdot\right\Vert _{M}$ (maximal absolute value of a matrix entry)
dominates the operator norm $\left\Vert \cdot\right\Vert _{\infty}$, i.e. we have $\left\Vert \cdot\right\Vert _{\infty}\ll_\ell \left\Vert \cdot\right\Vert _{M}$.
Thus, \eqref{size_elem} gives
$$
\left\Vert M^{-1}\right\Vert _{\infty}\ll_\ell x_\ell^{\ell-1} x_1^{-1} \left(\frac{1}{\gamma} \right)^{\ell-1} ,
$$
as desired.
\end{proof}
As a consequence, we are now able to prove the enunciated repulsion
principle. In the statement of the following lemma, as throughout the proof, $(a_n)_{n\geq 1}$ is the sequence from the statement of Theorem \ref{thm: main}. Recall that by assumption $(a_n)_{n\geq 1}$ satisfies \eqref{eq: growth assumptions1} and \eqref{eq: growth assumptions2}, which will be used in the proof of the lemma. Recall also the definition of $\phi\left(\mathbf{u},\mathbf{t},\alpha\right)$ in \eqref{eq:phi_def} and the definition of $\Van_\ell$ in \eqref{van_def}.
\begin{lem}[Repulsion principle]
\label{lem: repulsion principle} Let $\ell$ be a positive integer such that $ \ell \le 2k $. 
 Let $\bfu\in\mathbb{Z}_{\neq0}^{\ell}$, and let $\bft = (t_1,\ldots,t_\ell)\in \mathbb{N}^\ell$ be such that $t_1 < \dots  < t_\ell$. Then for any (arbitrarily large) $\eta>0$,
\begin{equation} \label{poss}
\min_{\alpha\in\J}\Van_{\ell}\left(\phi\left(\mathbf{u},\mathbf{t},\alpha\right)\right)\gg t_\ell^{\eta}.
\end{equation}
\end{lem}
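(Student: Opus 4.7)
The core idea is to extract the repulsion bound from a linear-algebra identity that relates the derivatives of $\phi$ to the individual exponential terms. Differentiating \eqref{eq:phi_def} with respect to $\alpha$ yields
\[
\phi^{(j)}(\alpha) = \sum_{i \leq \ell} u_{i} a_{t_{i}}^{j} e^{\alpha a_{t_{i}}}, \qquad j=1,\ldots,\ell,
\]
so that if we set $x_{i}\eqdef a_{t_{i}}$ and $w_{i}(\alpha)\eqdef u_{i} e^{\alpha a_{t_{i}}}$, then $\bigl(\phi^{(1)}(\alpha),\ldots,\phi^{(\ell)}(\alpha)\bigr)^{T} = M(x_{1},\ldots,x_{\ell}) \cdot (w_{1}(\alpha),\ldots,w_{\ell}(\alpha))^{T}$, where $M$ is precisely the matrix in \eqref{def: matrix}. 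Because $t_{1}<\cdots<t_{\ell}$ and $(a_{n})$ is increasing, the $x_{i}$ are distinct, so $M$ is invertible and we may apply Lemma~\ref{lem: spectral norm bound}.

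Inverting the linear system gives, for every $\alpha\in\J$,
\[
|u_{\ell}| e^{\alpha a_{t_{\ell}}} = |w_{\ell}(\alpha)| \leq \|M^{-1}\|_{\infty}\cdot \Van_{\ell}\phi(\alpha).
\]
Since $\bfu\in\mathbb{Z}_{\neq 0}^{\ell}$ forces $|u_{\ell}|\geq 1$, and since $\alpha\geq A>0$, the left-hand side is at least $e^{Aa_{t_{\ell}}}$ \emph{independently} of $\bfu$. This is the key point: the integrality of $\bfu$ transfers a uniform lower bound from $|w_{\ell}|$ to $\Van_{\ell}\phi$, namely
\[
\min_{\alpha\in\J}\Van_{\ell}\phi(\alpha) \geq \frac{e^{A a_{t_{\ell}}}}{\|M^{-1}\|_{\infty}}.
\]

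It then remains to control $\|M^{-1}\|_{\infty}$. By Lemma~\ref{lem: spectral norm bound} we need a lower bound on the minimal consecutive gap $\gamma \eqdef \min_{i}(a_{t_{i+1}}-a_{t_{i}})$. Using \eqref{eq: growth assumptions2} and the monotonicity of $(a_{n})$, one has $a_{t_{i+1}}-a_{t_{i}} \geq a_{t_{i}+1}-a_{t_{i}} \gg t_{i}^{-C}$ once $t_{i}$ is large enough (the finitely many small indices are handled by a positive absolute constant coming from strict monotonicity on an initial segment). Thus $\gamma \gg t_{\ell}^{-C}$, and Lemma~\ref{lem: spectral norm bound} yields
\[
\|M^{-1}\|_{\infty} \ll_{\ell}\; a_{t_{\ell}}^{\ell-1}\, a_{t_{1}}^{-1}\, t_{\ell}^{C(\ell-1)} \ll_{\ell}\; a_{t_{\ell}}^{\ell-1}\, t_{\ell}^{C(\ell-1)},
\]
the last estimate using $a_{t_{1}}\geq a_{1}>0$.

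Combining these bounds,
\[
\min_{\alpha\in\J}\Van_{\ell}\phi(\alpha) \gg_{\ell}\; \frac{e^{A a_{t_{\ell}}}}{a_{t_{\ell}}^{\ell-1}\, t_{\ell}^{C(\ell-1)}}.
\]
Here is where assumption \eqref{eq: growth assumptions1} does all the remaining work: since $a_{n}/\log n\to\infty$, the numerator $e^{A a_{t_{\ell}}}$ eventually dominates any fixed power of $t_{\ell}$ and any fixed power of $a_{t_{\ell}}$, hence the entire denominator. For $t_{\ell}$ below any fixed threshold the lower bound is absorbed into the implied constant. This proves \eqref{poss} for arbitrary $\eta>0$. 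The main obstacle is ensuring that the geometric degeneration of $M$ when the exponents $a_{t_{i}}$ cluster is controlled by the spacing hypothesis \eqref{eq: growth assumptions2}, and that the resulting polynomial-in-$t_{\ell}$ loss is beaten by the super-logarithmic growth hypothesis \eqref{eq: growth assumptions1}; both growth conditions in Theorem~\ref{thm: main} enter at this single juncture.
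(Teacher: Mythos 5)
Your proposal is correct and takes essentially the same route as the paper's own proof: express the derivative vector as $M\bfw$ with $M$ the scaled Vandermonde matrix of Lemma~\ref{lem: spectral norm bound}, invert, use the spacing assumption \eqref{eq: growth assumptions2} to control $\|M^{-1}\|_\infty$, and let the super-logarithmic growth \eqref{eq: growth assumptions1} make the exponential $e^{A a_{t_\ell}}$ swallow the polynomial losses. The only cosmetic difference is that the paper first absorbs $a_{t_\ell}^{\ell-1}$ into $e^{\alpha a_{t_\ell}/2}$ before invoking \eqref{eq: growth assumptions1}, whereas you argue directly that the full exponential beats the whole denominator; both are equivalent, and your extra detail justifying $\gamma\gg t_\ell^{-C}$ via monotonicity is a welcome clarification.
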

The implied constant in \eqref{poss} depends on $ \eta $, the sequence $(a_n)$, the interval $\J$ and the parameter $k$, which throughout the proof are assumed to be fixed.
\begin{proof}
Let $\alpha\in\J$.
To make the underlying structure more transparent, we denote $\bftau=(\partial_{\alpha}^{j}\phi\left(\mathbf{u},\mathbf{t},\alpha\right))_{j=1,\ldots,\ell}$,
$\bfw=(u_{i}e^{\alpha a_{t_{i}}})_{i=1,\ldots,\ell}$, and $M=M(a_{t_1},\dots,a_{t_\ell})$
 as in (\ref{def: matrix}). Then \[ \Van_{\ell}\left(\phi\left(\mathbf{u},\mathbf{t},\alpha\right)\right)=\| \bftau \|_\infty,  \]and
\begin{equation} \label{equ_rew}
 \bftau  = M\bfw .
\end{equation}
Note that we have $a_{t_{i+1}} - a_{t_i} \gg t_\ell^{-C}$ for some fixed positive constant $C$ by assumption \eqref{eq: growth assumptions2}. Thus by Lemma \ref{lem: spectral norm bound} and \eqref{equ_rew} we have
	\[
	\left\Vert \bfw\right\Vert _{\infty}\leq\left\Vert M^{-1}\right\Vert _{\infty}\left\Vert \bftau\right\Vert _{\infty}  \ll a_{t_{\ell}}^{\ell-1} t_\ell^{C(\ell-1)} \left\Vert \bftau\right\Vert _{\infty}
	\]
(we have used the bound $ a_{t_1}^{-1} \ll a_1^{-1} \ll 1$). Now note that
	$$
	\left\Vert \bfw\right\Vert _{\infty} \geq |u_\ell| e^{\alpha a_{t_\ell}} \geq e^{\alpha a_{t_\ell}}.
	$$ Combining the two equations above we obtain
	\begin{equation} \label{finite solutions}
	e^{\alpha a_{t_\ell}}\ll a_{t_{\ell}}^{\ell-1} t_\ell^{C(\ell-1)} \left\Vert \bftau\right\Vert _{\infty}.
	\end{equation}
Recall that the interval $\J$ and therefore $\alpha$ are bounded away from 0 by assumption, so that $a_{t_{\ell}}^{\ell-1} \ll e^{\frac{\alpha a_{t_\ell}}{2}} $, and hence \eqref{finite solutions} gives
\begin{equation} \label{finite solutions2}
	\left\Vert \bftau\right\Vert _{\infty} \gg e^{ \frac{\alpha a_{t_\ell}}{2}} t_\ell^{-C(\ell-1)}.
	\end{equation}
By assumption \eqref{eq: growth assumptions1} it follows that $e^{ \frac{\alpha a_{t_\ell} }{2}} \gg t_\ell^{\eta+C(\ell-1)}$. Inserting this into \eqref{finite solutions2} gives the required bound \eqref{poss}.
\end{proof}

As a corollary, we get the required bound for the integral $ I(\bfu,\bft) $ (recall the notation \eqref{I_form}).

\begin{cor}
\label{cor: IntegralBound} Let $\ell$ be a positive integer such that $ \ell \le 2k $.
Let $\bfu\in\mathbb{Z}_{\neq0}^{\ell}$, and let $\bft = (t_1,\ldots,t_\ell)\in \mathbb{N}^\ell$ be such that $t_1 < \dots  < t_\ell$. Then for any (arbitrarily large) $\eta>0$,
\begin{equation}
	I(\bfu, \bft) \ll t_\ell^{-\eta}.
\end{equation}
\end{cor}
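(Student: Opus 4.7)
The proof will be a direct combination of the modified Van der Corput lemma (Lemma \ref{lem: modified Van der Corput}) with the repulsion principle (Lemma \ref{lem: repulsion principle}) and the zero-counting lemma (Lemma \ref{lem: bound number of zeros of phase function}). The plan is as follows.

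First, given $\bfu \in \mathbb{Z}_{\neq 0}^{\ell}$ and $\bft$ with $t_1<\dots<t_\ell$, I consider the phase $\phi(\alpha) = \phi(\bfu,\bft,\alpha) = \sum_{i \le \ell} u_i e^{\alpha a_{t_i}}$. By the repulsion principle (applied with $\eta' = \ell \eta$ in place of $\eta$), we have the uniform lower bound $\min_{\alpha \in \J} \Van_\ell \phi(\alpha) \gg t_\ell^{\ell \eta} =: \lambda$.

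Next, I need to verify the remaining hypothesis of Lemma \ref{lem: modified Van der Corput}, namely a bound on the number of zeros of $\phi^{(\ell)}$. Differentiating term by term gives
\[
\phi^{(\ell)}(\alpha) = \sum_{i \le \ell} u_i a_{t_i}^{\ell} e^{\alpha a_{t_i}}.
\]
Since the $u_i$ are nonzero and the $a_{t_i}$ are positive, the coefficients $u_i a_{t_i}^{\ell}$ are all nonzero, and since $t_1 < \dots < t_\ell$ and $(a_n)_{n\ge 1}$ is strictly increasing, the exponents $a_{t_1} < \dots < a_{t_\ell}$ are distinct. Lemma \ref{lem: bound number of zeros of phase function} thus gives at most $\ell - 1$ zeros of $\phi^{(\ell)}$, a bound depending only on $\ell$. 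In the special case $\ell = 1$, the function $\phi'(\alpha) = u_1 a_{t_1} e^{\alpha a_{t_1}}$ is manifestly monotone on $\J$, as required by Lemma \ref{lem: modified Van der Corput} for the first-derivative case.

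Finally, invoking Lemma \ref{lem: modified Van der Corput} yields
\[
I(\bfu,\bft) = I(\phi,\J) \ll_{\ell} \lambda^{-1/\ell} \ll t_\ell^{-\eta},
\]
which is the desired bound. Since $\eta > 0$ was arbitrary (and $\ell \le 2k$ is bounded in terms of $k$), the implicit constant depends only on $\eta$, $k$, the interval $\J$, and the sequence $(a_n)$, as permitted. There is no real obstacle here; the work was done in establishing the repulsion principle, and this corollary is essentially a formal packaging step that converts a pointwise derivative lower bound into an oscillatory-integral upper bound via Van der Corput.
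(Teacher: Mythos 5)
Your proof is correct and follows precisely the same route as the paper, which simply states the corollary is ``an immediate consequence'' of Lemmas \ref{lem: modified Van der Corput}, \ref{lem: bound number of zeros of phase function}, and \ref{lem: repulsion principle}. You have correctly spelled out the details the paper leaves implicit: applying the repulsion principle with exponent $\ell\eta$, verifying the zero-count hypothesis via Lemma \ref{lem: bound number of zeros of phase function} applied to $\phi^{(\ell)}$, and checking monotonicity of $\phi'$ in the edge case $\ell=1$.
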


\begin{proof}
	This is an immediate consequence of Lemma \ref{lem: modified Van der Corput}, Lemma \ref{lem: bound number of zeros of phase function} and Lemma \ref{lem: repulsion principle}.
\end{proof}

\section{Variance estimates}
We begin this section with our definition of the variance of the $ k $-point correlation sum $ R_{k}\left(f,\alpha,N\right) $ with respect to $ \alpha \in \J $. Recall that $\mathcal{B}_{k}=\mathcal{B}_{k}\left(N\right)$ is the
set of integer $k$-tuples $\left(x_{1},\dots,x_{k}\right)$ such
that $1\le x_{i}\le N$ for all $i=1,\dots,k$ and such that no two
components $x_{i}$ are equal.

\begin{defn}
	The variance of
	the $k$-point correlation sum $R_{k}\left(f,\alpha,N\right)$ with
	respect to the interval $\mathcal{\J}$ is defined as
	\[
	\mathrm{Var}\left(R_{k}\left(f,\cdot,N\right),\J\right)\eqdef\int_{\mathcal{\mathcal{\J}}}\left(R_{k}\left(f,\alpha,N\right)-C_{k}\left(N\right)\int_{\mathbb{R}^{k-1}}f\left({\bf x}\right)\,\text{d}{\bf x}\right)^{2}\,\mathrm{d}\alpha
	\]
	where
	\begin{equation}
		C_{k}\left(N\right)\eqdef\frac{\# \mathcal{B}_{k}}{N^k}=\left(1-\frac{1}{N}\right)\cdots\left(1-\frac{k-1}{N}\right).\label{eq:C_k(N)}
	\end{equation}
	The reason for the combinatorial factor (\ref{eq:C_k(N)}) will be
	apparent in the proof below.
\end{defn}

The goal of this section is to show that the variance $ \mathrm{Var}\left(R_{k}\left(f,\cdot,N\right),\J\right) $ decays polynomially in $ N $:

\begin{prop}
	\label{prop:MainProp}For all $ \varepsilon>0 $, we have \[ 	\mathrm{Var}\left(R_{k}\left(f,\cdot,N\right),\J\right)=O(N^{-1+\varepsilon}). \]
\end{prop}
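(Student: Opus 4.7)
The plan is to follow the strategy outlined in Section 2: expand the variance, apply Poisson summation to recast it as an average of the oscillatory integrals $I(\bfu,\bft)$ from \eqref{I_form}, and then estimate that sum by combining the bound of Corollary \ref{cor: IntegralBound} on ``non-degenerate'' configurations with a combinatorial count of the exceptional degenerate ones.

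The first step is the Poisson--summation reduction. Substituting the definition \eqref{rk_def} into the variance and applying Poisson summation, the $\bfh=0$ Fourier frequency produces exactly the mean $C_{k}(N)\int f$ that is subtracted off, leaving
\[
R_{k}(f,\alpha,N)-C_{k}(N)\int_{\mathbb{R}^{k-1}} f(\bfx)\,\mathrm{d}\bfx
= \frac{1}{N^{k}}\sum_{\bfx\in\calBN}\sum_{\bfh\neq 0}\hat f(\bfh/N)\,e(\bfh\cdot\Delta(\bfx,\vartheta)).
\]
The telescoping identity $\bfh\cdot\Delta(\bfx,\vartheta)=\sum_{i\le k}v_{i}e^{\alpha a_{x_{i}}}$ with $v_{1}=h_{1}$, $v_{i}=h_{i}-h_{i-1}$ for $2\le i\le k-1$, and $v_{k}=-h_{k-1}$ produces a vector $\bfv\in\mathbb{Z}^{k}$ that is nonzero, satisfies $\sum v_{i}=0$, and has $\|\bfv\|_{\infty}\le 2\|\bfh\|_{\infty}$. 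After an analogous treatment of $\bfh'$, expanding the square and interchanging summation with the $\alpha$-integral expresses the variance as a weighted sum of the integrals $I(\bfu,\bft)$ with $\bft=(\bfx,\bfy)\in\calBN^{2}$ and $\bfu=(\bfv,\bfw)\in\mathbb{Z}^{2k}$. Since $\hat f$ is Schwartz, the tail with $\max(\|\bfh\|_{\infty},\|\bfh'\|_{\infty})>N^{1+\varepsilon}$ contributes $O(N^{-M})$ for every $M>0$ (after using $|I|\le|\J|$ trivially), while on the complementary range $|\hat f|$ is uniformly bounded and $(\bfv,\bfw)\in(\mathcal{U}_{k}^{\varepsilon})^{2}$. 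This reduces matters to proving $V_{k}(N,\J,\varepsilon)=O(N^{-1+\varepsilon})$.

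The second step is to split $(\mathcal{U}_{k}^{\varepsilon})^{2}\times\calBN^{2}$ into \emph{non-degenerate} configurations -- those for which $\phi(\bfu,\bft,\cdot)\not\equiv 0$ after grouping together terms whose exponents $a_{t_{i}}$ coincide -- and their complementary degenerate counterparts. On a non-degenerate configuration the grouping produces a shorter sum $\sum_{i\le\ell}u^{*}_{i}e^{\alpha a_{t^{*}_{i}}}$ with $\ell\le 2k$, all $u^{*}_{i}\neq 0$, and all $t^{*}_{i}$ distinct, with $\max_{i}t^{*}_{i}=\max_{i}t_{i}$. Corollary \ref{cor: IntegralBound} applied to this reduced phase with $\eta$ arbitrarily large gives $|I(\bfu,\bft)|\ll(\max_{i}t_{i})^{-\eta}$. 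Combining the bound $|\mathcal{U}_{k}^{\varepsilon}|^{2}\ll N^{2(k-1)(1+\varepsilon)}$ with a dyadic decomposition of $\max_{i}t_{i}\le N$, the non-degenerate contribution to $V_{k}$ is $O(N^{-M})$ for any prescribed $M>0$, provided $\eta$ is chosen large enough in terms of $k$, $\varepsilon$, and $M$.

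The main obstacle, and the final step, is the degenerate contribution, on which $\phi\equiv 0$ and hence $|I(\bfu,\bft)|=|\J|=O(1)$. By $\mathbb{R}$-linear independence of $\{e^{\alpha a_{n}}\}_{n\ge 1}$ on $\J$, degeneracy forces the multiset $\{x_{1},\dots,x_{k}\}$ to equal $\{y_{1},\dots,y_{k}\}$; the matching permutation is then uniquely determined by $(\bfx,\bfy)$, and the coefficient vector $\bfw$ is forced by $\bfv$ through this permutation via $w_{j}=-v_{\sigma^{-1}(j)}$ (note that the constraints $\|\bfw\|_{\infty}\le 2N^{1+\varepsilon}$ and $\sum w_{j}=0$ are automatic). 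Consequently the number of degenerate tuples is bounded by $k!\,|\calBN|\,|\mathcal{U}_{k}^{\varepsilon}|\ll N^{k+(k-1)(1+\varepsilon)}$, so the degenerate contribution to $V_{k}$ is $\ll N^{-1+(k-1)\varepsilon}$. Choosing the truncation parameter to be $\varepsilon/(k-1)$ in the definition of $\mathcal{U}_{k}^{\varepsilon}$ at the outset yields the asserted $O(N^{-1+\varepsilon})$ bound. I expect Proposition \ref{prop:comb_prop} to package exactly this matching-count argument, possibly subsuming partial cancellations that the crude dichotomy above glosses over.
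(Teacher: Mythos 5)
Your overall strategy agrees with the paper's: Poisson summation (Lemma \ref{lem:TruncatedPoisson}), truncation, and a reduction to bounding $V_k(N,\J,\varepsilon)$ as in Lemma \ref{lem: variance for higher corr via oscil inte}, followed by a case analysis on cancellations in the phase $\phi$ and an application of Corollary \ref{cor: IntegralBound}. However, the binary split into ``$\phi\equiv 0$'' and ``$\phi\not\equiv 0$'' is too coarse, and both halves of your analysis contain a substantive error.

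In the non-degenerate half you assert that after grouping, the surviving indices satisfy $\max_i t^*_i=\max_i t_i$. This is false: it is perfectly possible for the largest entry of $\bft$ to be carried by a term whose grouped coefficient vanishes. For instance, with $k=2$, $\bfv=(1,-1)$, $\bfw=(-1,1)$, $x_1=y_1=N$, $x_2=3$, $y_2=2$, the phase reduces to $-e^{\alpha a_3}+e^{\alpha a_2}$, so $\max_i t^*_i=3$ while $\max_i t_i=N$. Corollary \ref{cor: IntegralBound} only gives $|I(\bfu,\bft)|\ll(\max_i t^*_i)^{-\eta}$, and once $\max_i t^*_i$ can be much smaller than $\max_i t_i$, your dyadic decomposition breaks: there are $\asymp N^r$ tuples $\bft$ with the same reduced phase (where $r$ is the number of ``phantom'' indices whose grouped coefficient is zero), and these cannot all be absorbed into the convergent tail of $(\max_i t_i)^{-\eta}$. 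To control them one must observe that the same cancellations that create $r$ phantom indices also impose $r$ linear constraints on $(\bfv,\bfw)$, reducing the number of free $\bfu$-parameters. The paper does exactly this: Proposition \ref{prop:comb_prop} conditions on the full cancellation pattern $\bftau=(\I_1,\I_1',\I_2,\I_2',\I_3,\I_3')$, shows $m+r\le 2k-1$ (with $m$ the number of free $u$-parameters and $r$ the number of phantom $t$-variables), and then sums $\ll N^{m(1+\varepsilon)+r}\le N^{(2k-1)(1+\varepsilon)}$. Your uniform bound $|\mathcal{U}_k^\varepsilon|^2\ll N^{2(k-1)(1+\varepsilon)}$ for the $\bfu$-sum, taken jointly with an $O(1)$ bound on the $\bft$-sum, does not see this trade-off, so the non-degenerate contribution is not justified.

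There is also an error in the degenerate half: full degeneracy $\phi\equiv 0$ does not force $\{x_1,\dots,x_k\}=\{y_1,\dots,y_k\}$. The correct consequence of linear independence is that every grouped coefficient vanishes, which allows unmatched indices $x_i$ as long as $v_i=0$ (and similarly for $y_j$). This is possible for $k\ge 3$ (e.g.\ $\bfv=(1,-1,0)$). Such configurations contribute additional degenerate tuples beyond the matched ones you count, although as it happens the order of magnitude of the total count is unchanged. Still, the characterization you state is wrong as written, and the clean way to resolve both issues simultaneously is the finer bookkeeping via the pattern $\bftau$, exactly as in Proposition \ref{prop:comb_prop}.
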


The first routine step will be to express $ R_k $ in terms of an exponential sum. 
Fix $\varepsilon>0$, and set 
$$
\calBNp=\calBNp\left(N\right) = \left\{ \mathbf{n} \in \mathbb{Z}^{k-1}: ~1\le\|\mathbf{n}\|_\infty \leq N^{1 + \varepsilon} 
\right\}.
$$
For the statement of the following lemma, recall the definition of the difference vector $\Delta\left(\bfx,\alpha\right) = \Delta\left(\bfx,\vartheta(\alpha)\right)$ in \eqref{delta_def}.

\begin{lem}
	\label{lem:TruncatedPoisson}For all $\eta>0$ we have
	\begin{align}
		R_{k}\left(f,\alpha,N\right) & =C_{k}\left(N\right)\int_{\mathbb{R}^{k-1}}f\left({\bf x}\right)\,\mathrm{d}{\bf x}\label{eq: truncated Poisson form}\\
		& +\frac{1}{N^{k}}\sum_{\bfx\in\mathcal{B}_{k}}\sum_{\bfn\in\calBNp}\widehat{f}\left(\frac{\bfn}{N}\right)e\left(\left\langle \Delta\left(\bfx,\alpha\right),\bfn\right\rangle \right)+\O(N^{-\eta})\nonumber 
	\end{align}
	as $N\to\infty$.
\end{lem}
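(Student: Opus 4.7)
The plan is to use Poisson summation to convert the inner $\bfm$-sum in the definition \eqref{rk_def} into a Fourier series over $\bfn \in \mathbb{Z}^{k-1}$, and then to truncate that Fourier series at $\|\bfn\|_{\infty}=N^{1+\varepsilon}$, using the Schwartz decay of $\widehat{f}$ to control the tail. The argument is of a routine analytic flavor; no arithmetic information about $(a_n)$ enters here.

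I would fix $\bfx\in\mathcal{B}_k$ and $\alpha\in\J$, set $g(\bfy)\eqdef f(N\bfy)$, and observe via the change of variables $\bfz=N\bfy$ that $\widehat{g}(\bfn)=N^{-(k-1)}\widehat{f}(\bfn/N)$. Poisson summation applies without issue since $f\in C_c^{\infty}$ forces both $g$ and $\widehat{g}$ to be Schwartz, giving absolute convergence. Applying it to the periodization $\bfy\mapsto\sum_{\bfm} g(\bfy-\bfm)$ at $\bfy=\Delta(\bfx,\alpha)$ yields
$$\sum_{\bfm\in\mathbb{Z}^{k-1}} f\bigl(N(\Delta(\bfx,\alpha)-\bfm)\bigr) = N^{-(k-1)}\sum_{\bfn\in\mathbb{Z}^{k-1}} \widehat{f}(\bfn/N)\, e\bigl(\langle \bfn,\Delta(\bfx,\alpha)\rangle\bigr).$$
Substituting into \eqref{rk_def} and summing over $\bfx\in\mathcal{B}_k$ produces the exact identity
$$R_k(f,\alpha,N) = \frac{1}{N^k}\sum_{\bfx\in\mathcal{B}_k}\sum_{\bfn\in\mathbb{Z}^{k-1}} \widehat{f}(\bfn/N)\, e\bigl(\langle \bfn,\Delta(\bfx,\alpha)\rangle\bigr).$$

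I would then isolate the zero frequency $\bfn=\mathbf{0}$, whose contribution is $\frac{\#\mathcal{B}_k}{N^k}\widehat{f}(\mathbf{0})=C_k(N)\int_{\mathbb{R}^{k-1}} f(\bfx)\,\mathrm{d}\bfx$, matching the main term of \eqref{eq: truncated Poisson form}. The nonzero frequencies with $\bfn\in\calBNp$ are exactly the second term of \eqref{eq: truncated Poisson form}, so the task reduces to showing that the tail
$$T(\alpha)\eqdef\frac{1}{N^k}\sum_{\bfx\in\mathcal{B}_k}\sum_{\|\bfn\|_{\infty}>N^{1+\varepsilon}} \widehat{f}(\bfn/N)\, e\bigl(\langle \bfn,\Delta(\bfx,\alpha)\rangle\bigr)$$
is $O(N^{-\eta})$ uniformly in $\alpha\in\J$.

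To bound $T(\alpha)$ I would use the Schwartz decay: for every $M>0$, $|\widehat{f}(\bfy)|\ll_M (1+\|\bfy\|_{\infty})^{-M}$. Estimating trivially with $\#\mathcal{B}_k\le N^k$ and $|e(\cdot)|=1$, and counting $O(R^{k-2})$ lattice vectors of $\|\cdot\|_\infty$-norm $R$ in $\mathbb{Z}^{k-1}$, gives
$$|T(\alpha)| \ll_M \sum_{\|\bfn\|_{\infty}>N^{1+\varepsilon}} \bigl(N/\|\bfn\|_{\infty}\bigr)^{M} \ll_M N^{M}\sum_{R>N^{1+\varepsilon}} R^{k-2-M} \ll N^{(1+\varepsilon)(k-1)-\varepsilon M}$$
for $M>k-1$. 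Choosing $M$ large enough that $\varepsilon M>\eta+(1+\varepsilon)(k-1)$ delivers the required bound $T(\alpha)=O(N^{-\eta})$. There is no genuine obstacle; the only point requiring a moment's care is that this tail bound is uniform in $\alpha$, which is automatic because the estimate uses only $|\widehat{f}|$ and never exploits cancellation from the phase.
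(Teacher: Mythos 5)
Your proof is correct and follows essentially the same route as the paper: Poisson summation, isolating the zero frequency, and using the rapid decay of $\widehat{f}$ to truncate the Fourier series at $\|\bfn\|_{\infty}=N^{1+\varepsilon}$. The paper simply refers to \cite[Lemma 3.4]{YT} for these details; your write-up carries them out explicitly, and the tail bound and choice of $M$ are right.
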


\begin{proof}
	By the Poisson summation formula,
	\[
	R_{k}\left(f,\alpha,N\right)=\frac{1}{N^{k}}\sum_{\bfx\in\mathcal{B}_{k}}\sum_{\bfn\in\mathbb{Z}^{k-1}}\widehat{f}\left(\frac{\bfn}{N}\right)e\left(\left\langle \Delta\left(\bfx,\alpha\right),\bfn\right\rangle \right).
	\]
	Formula (\ref{eq: truncated Poisson form}) now easily follows by separating the zero-th term and using the fact that the Fourier coefficients of any $f\in C_{c}^{\infty}(\mathbb{R}^{k-1})$ decay to zero faster than the reciprocal of any polynomial, see the proof of \cite[Lemma 3.4]{YT}.
\end{proof}
Given ${\bf n}\in\mathbb{Z}^{k-1}$, we define the vector $\bfh\left({\bf n}\right)=\left(h_{1}\left({\bf n}\right),\dots,h_{k}\left({\bf n}\right)\right)\in\mathbb{Z}^{k}$
by the rule
\[
h_{i}\left(\bfn\right)\eqdef\begin{cases}
	n_{1}, & \mathrm{if}\,i=1,\\
	n_{i}-n_{i-1}, & \mathrm{if}\,2\leq i\leq k-1\\
	-n_{k-1}, & \mathrm{if}\,i=k.
\end{cases},
\]
This definition is motivated by the identity
\begin{equation}
	\label{eq:partial_summation_identity}
\left\langle \Delta\left(\bfx,\alpha\right),\bfn\right\rangle =\phi\left({\bf h}\left(\bfn\right),\bfx,\alpha\right).
\end{equation}
Note that the linear map ${\bf n}\mapsto{\bf h}\left({\bf n}\right)$
is injective. Moreover, it satisfies the bound
\begin{equation}
	\left\Vert {\bf {\bf h}}\left({\bf n}\right)\right\Vert _{\infty}\le2\left\Vert {\bf n}\right\Vert _{\infty}\label{eq:u_n_range}
\end{equation}
and the relation
\begin{equation}
	\sum_{i=1}^{k}h_{i}\left(\bfn\right)=0.\label{eq:u_n_relation}
\end{equation}
Let 
\[
\mathcal{U}_{k}^{\varepsilon}=\mathcal{U}_{k}^{\varepsilon}\left(N\right)=\left\{ {\bf u}=\left(u_{1},\dots,u_{k}\right)\in\mathbb{Z}^{k}:\,1\le\left\Vert \bfu\right\Vert _{\infty}\le2N^{1+\varepsilon},\,u_{1}+\dots+u_{k}=0\right\} ,
\]
and note that the relations (\ref{eq:u_n_range}), (\ref{eq:u_n_relation})
imply that ${\bf h}\left({\bf n}\right)\in\mathcal{U}_{k}^{\varepsilon}$
whenever ${\bf{n}} \in\calBNp$.
\begin{lem}
	\label{lem: variance for higher corr via oscil inte} For all $\eta>0$,
	we have
	\begin{align}
		\mathrm{Var}\left(R_{k}\left(f,\cdot,N\right),\J\right) & \ll V_{k}\left(N,\J,\varepsilon\right)+N^{-\eta}\label{eq: rep of variance remainder}
	\end{align}
	as $N\to\infty$, where $ V_{k}\left(N,\J,\varepsilon\right) $ is given in \eqref{eq:Vk}.
\end{lem}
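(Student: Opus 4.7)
The approach is to combine Lemma \ref{lem:TruncatedPoisson} with a direct second-moment expansion. Setting
\[
S(\alpha) \eqdef \frac{1}{N^{k}} \sum_{\bfx \in \calBN} \sum_{\bfn \in \calBNp} \widehat{f}\!\left(\frac{\bfn}{N}\right) e\!\left(\langle \Delta(\bfx, \alpha), \bfn \rangle\right),
\]
Lemma \ref{lem:TruncatedPoisson} decomposes $R_k(f, \alpha, N) - C_k(N) \int_{\mathbb{R}^{k-1}} f(\bfx) \, \mathrm{d}\bfx = S(\alpha) + E(\alpha)$ with $|E(\alpha)| \ll N^{-\eta}$ uniformly for $\alpha \in \J$. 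The elementary inequality $(a+b)^2 \leq 2a^2 + 2b^2$ yields
\[
\mathrm{Var}(R_k(f,\cdot,N), \J) \leq 2 \int_\J S(\alpha)^2 \, \mathrm{d}\alpha + \O(N^{-2\eta}),
\]
and since $\eta>0$ can be chosen arbitrarily large, the second term is already of the form required in \eqref{eq: rep of variance remainder}. It therefore suffices to bound $\int_\J S(\alpha)^2 \, \mathrm{d}\alpha$ by $V_k(N, \J, \varepsilon)$.

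The next step is to expand the square and swap the (finite) sums with the integration:
\[
\int_\J S(\alpha)^2 \, \mathrm{d}\alpha = \frac{1}{N^{2k}} \sum_{\bfx, \bfy \in \calBN} \sum_{\bfn, \bfm \in \calBNp} \widehat{f}\!\left(\frac{\bfn}{N}\right) \widehat{f}\!\left(\frac{\bfm}{N}\right) \int_\J e\!\left(\langle \Delta(\bfx,\alpha), \bfn\rangle + \langle \Delta(\bfy,\alpha), \bfm\rangle\right) \mathrm{d}\alpha.
\]
Two applications of identity \eqref{eq:partial_summation_identity} rewrite the exponent as $\phi(\bfh(\bfn), \bfx, \alpha) + \phi(\bfh(\bfm), \bfy, \alpha) = \phi(\bfu, \bft, \alpha)$, where $\bfu \eqdef (\bfh(\bfn), \bfh(\bfm)) \in \mathbb{Z}^{2k}$ is the concatenation of the two $\bfh$-vectors and $\bft \eqdef (\bfx, \bfy) \in \calBN^{\,2}$. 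The inner integral is then exactly $I(\bfu, \bft)$ as defined in \eqref{I_form}. Using the crude bound $|\widehat{f}(\bfn/N)| \ll 1$ (immediate since $f \in C_c^\infty(\mathbb{R}^{k-1})$ has a bounded Fourier transform) and taking absolute values gives
\[
\int_\J S(\alpha)^2 \, \mathrm{d}\alpha \ll \frac{1}{N^{2k}} \sum_{\bfn, \bfm \in \calBNp} \sum_{\bfx, \bfy \in \calBN} \bigl|I\bigl((\bfh(\bfn), \bfh(\bfm)), (\bfx, \bfy)\bigr)\bigr|.
\]

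To finish, the plan invokes the properties of the map $\bfn \mapsto \bfh(\bfn)$ recorded in \eqref{eq:u_n_range} and \eqref{eq:u_n_relation}: it is injective and sends $\calBNp$ into $\mathcal{U}_k^\varepsilon$. Consequently $(\bfh(\bfn), \bfh(\bfm)) \in (\mathcal{U}_k^\varepsilon)^2$, and extending the summation to all of $(\mathcal{U}_k^\varepsilon)^2 \times \calBN^{\,2}$ can only enlarge the right-hand side. What remains is precisely $V_k(N, \J, \varepsilon)$ as defined in \eqref{eq:Vk}, establishing the claim.

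The argument is essentially bookkeeping and I do not anticipate any genuine obstacle. The two points that require attention are (i) verifying that the concatenation $(\bfh(\bfn), \bfh(\bfm))$ really sits in $(\mathcal{U}_k^\varepsilon)^2$, meaning that each half independently satisfies the $\ell^\infty$ bound $2N^{1+\varepsilon}$ and the zero-sum relation---both built into the definition of $\bfh$; and (ii) noting that the $2k$-term phase $\phi$ in \eqref{eq:phi_def} makes perfect sense even when some of the time indices $t_i$ coincide between the $\bfx$- and $\bfy$-halves. Such coincidences produce no difficulty here, but they foreshadow the ``degenerate'' configurations whose separate treatment (Proposition \ref{prop:comb_prop}) enters when $V_k$ itself is later estimated.
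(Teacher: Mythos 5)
Your proof is correct and follows essentially the same route as the paper's: truncated Poisson expansion (Lemma \ref{lem:TruncatedPoisson}), expansion of the square, the identity \eqref{eq:partial_summation_identity} together with injectivity of $\bfn\mapsto\bfh(\bfn)$, and enlarging the index set to $(\mathcal{U}_k^\varepsilon)^2\times\mathcal{B}_k^2$. The only (harmless) variation is that you dispose of the error term via $(a+b)^2\le 2a^2+2b^2$, whereas the paper expands all cross terms and absorbs them using $\widehat f\ll 1$ and a large choice of the auxiliary exponent $\tilde\eta$.
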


\begin{proof}
	By Lemma \ref{lem:TruncatedPoisson}, for all $\tilde{\eta}>0$ we
	have
	\[
	\mathrm{Var}\left(R_{k}\left(f,\cdot,N\right),\J\right)=\int_{\J}\Bigl(N^{-k}\sum_{\bfx\in\mathcal{B}_{k}} \sum_{\bfn\in\calBNp}
	\widehat{f}\left(\frac{\bfn}{N}\right)e\left(\left\langle \Delta\left(\bfx,\alpha\right),\bfn\right\rangle \right)+\O(N^{-\tilde{\eta}})\Bigr)^{2}\:\mathrm{d}\alpha.
	\]
	Expanding the square and taking $\tilde{\eta}$ sufficiently large, the bound  $\widehat{f}\ll1$ readily implies that
	for all $\eta>0$,
	\begin{align*}
		 \mathrm{Var}\left(R_{k}\left(f,\cdot,N\right),\J\right)=I+O\left(N^{-\eta}\right)		
	\end{align*}
	where
	\[
	I=N^{-2k}\sum_{\substack{\bfx,{\bf y}\in\mathcal{B}_{k},\\
			\bfn,{\bf m}\in\calBNp
		}
	}\widehat{f}\left(\frac{\bfn}{N}\right)\widehat{f}\left(\frac{{\bf m}}{N}\right)\int_{\J}e\left(\left\langle \Delta\left(\bfx,\alpha\right),\bfn\right\rangle +\left\langle \Delta\left({\bf y},\alpha\right),{\bf m}\right\rangle \right)\:\mathrm{d}\alpha.
	\]
	By the identity \eqref{eq:partial_summation_identity} and by the injectivity of the map ${\bf n}\mapsto{\bf h}\left({\bf n}\right)$
	we conclude that
	\[
	I\ll N^{-2k}\sum_{\substack{\bfx,{\bf y}\in\mathcal{B}_{k},\\
			\bfn,{\bf m}\in\calBNp
		}
	}\left|\int_{\J}e\left(\phi\left({\bf h}\left(\bfn\right),\bfx,\alpha\right)+\phi\left({\bf h}\left(\bfm\right),{\bf y},\alpha\right)\right)\:\mathrm{d}\alpha\right| \ll V_{k}\left(N,\J,\varepsilon\right)
	\]
	which gives the claimed bound.
\end{proof}

We will now bound $V_{k}\left(N,\J,\varepsilon\right)$ using a combinatorial
argument. Combined with Lemma \ref{lem: variance for higher corr via oscil inte}, this will give Proposition \ref{prop:MainProp}.
\begin{prop}
	\label{prop:comb_prop}
	We have~ 
	\[
	V_{k}\left(N,\J,\varepsilon\right)=O(N^{-1+\left(2k-1\right)\varepsilon}).
	\]
\end{prop}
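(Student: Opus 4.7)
The main strategy is to insert the bound from Corollary \ref{cor: IntegralBound} into each summand $|I(\mathbf{u},\mathbf{t})|$ of $V_k$; however, the corollary demands distinct exponents and nonzero coefficients, whereas the defining sum ranges over $\mathbf{t}=(\mathbf{x},\mathbf{y})$ with $\mathbf{x},\mathbf{y}\in\mathcal{B}_k$, which may satisfy $x_i=y_j$ for some $i,j$. The first step will therefore be to regroup: letting $s_1<\cdots<s_L$ denote the distinct values appearing in $\mathbf{t}$ and $c_m:=\sum_{i:\,t_i=s_m} u_i$ the combined coefficients, we have $\phi(\mathbf{u},\mathbf{t},\alpha)=\sum_{m\le L} c_m e^{\alpha a_{s_m}}$. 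Set $\ell:=\#\{m:c_m\ne 0\}$ and, when $\ell\ge 1$, $s^*:=\max\{s_m:c_m\ne 0\}$. The key dichotomy is between the \emph{non-degenerate} regime $\ell\ge 1$, in which Corollary \ref{cor: IntegralBound} yields $|I(\mathbf{u},\mathbf{t})|\ll (s^*)^{-\eta}$ for any prescribed $\eta>0$, and the \emph{degenerate} regime $\ell=0$, in which the phase vanishes identically, so that only $|I|\le|\J|=1$ is available.

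For the non-degenerate contribution I would take $\eta$ very large (say $\eta>4k$) and parameterize the sum by $s=s^*$. Writing $r$ for the number of $t_i$'s exceeding $s$ and $d$ for the number of distinct such values, each of the $d$ distinct top values imposes one linear cancellation constraint on $(\mathbf{v},\mathbf{w})$ (either $v_i=0$, $w_j=0$, or $v_i+w_j=0$), cutting the count of admissible $(\mathbf{v},\mathbf{w})$ down to $O(N^{(2k-2-d)(1+\varepsilon)})$. The combinatorial cost of choosing such $\mathbf{t}$ is at most $O(N^d\cdot s^{2k-1-r})$, so the total exponent of $N$ is $d+(2k-2-d)(1+\varepsilon)=(2k-2)(1+\varepsilon)-d\varepsilon$. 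Summing the geometric series in $s$ (convergent for $\eta>2k$) and maximizing over $r,d$ (the worst case being $r=d=0$, i.e. no top-cancellation) yields a non-degenerate contribution to $V_k$ of size $O(N^{-2+(2k-2)\varepsilon})$, which is comfortably below the target.

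The degenerate contribution is the dominant one and the more delicate. I would classify degenerate tuples by the number $r$ of coincidences $x_i=y_j$. The condition $\ell=0$ forces $v_i=0$ for every $i$ with $x_i\notin\mathbf{y}$, $w_j=0$ for every $j$ with $y_j\notin\mathbf{x}$, and $v_i+w_j=0$ at every coincidence; since $\|\mathbf{v}\|_\infty\ge 1$, this forces $r\ge 1$. The number of $(\mathbf{x},\mathbf{y})$ with exactly $r$ coincidences is $O(N^{2k-r})$, and once the collision pattern is fixed, $\mathbf{v}$ is determined by its $r$ entries at the coincidences subject to $\sum v_i=0$, yielding $O(N^{(r-1)(1+\varepsilon)})$ choices; $\mathbf{w}$ is then forced. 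Summing over $r\in\{1,\dots,k\}$ and using $|I|\le 1$ produces a degenerate contribution of $O(N^{-1+(k-1)\varepsilon})\le O(N^{-1+(2k-1)\varepsilon})$, which matches the claimed bound.

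The principal obstacle throughout is the degenerate case itself: Lemma \ref{lem: repulsion principle} is silent when the phase vanishes identically, so the only available lever is the combinatorial rigidity enforced by the cancellation equations together with the global constraints $\sum v_i=\sum w_j=0$ built into $\mathcal{U}_k^\varepsilon$. The partial ``top-cancellation'' subcase in the non-degenerate analysis poses a similar difficulty and is handled by the same philosophy: every cancellation at a large $t_i$ consumes one degree of freedom in $(\mathbf{v},\mathbf{w})$, and this saving must be matched carefully against the combinatorial cost of choosing such a large $t_i$. Extracting a polynomial saving of roughly $N^{1-(2k-1)\varepsilon}$ from this rigidity is the combinatorial heart of Proposition \ref{prop:comb_prop}.
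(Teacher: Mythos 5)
Your proposal takes essentially the same approach as the paper: classify the summands $(\mathbf{u},\mathbf{t})$ by their collision and cancellation pattern, handle the degenerate configurations (where the phase vanishes identically, so Corollary \ref{cor: IntegralBound} gives no saving) separately from the non-degenerate ones, and trade each cancellation equation for one lost degree of freedom in $(\mathbf{v},\mathbf{w})$ against the combinatorial cost of producing the coincidence. The paper formalizes this with a fixed classifying tuple $\bftau=(\I_1,\I_1',\I_2,\I_2',\I_3,\I_3')$ summed over $O(1)$ types, whereas you parameterize by the number $r$ of collisions in the degenerate case and by $(s^*,r,d)$ in the non-degenerate case, but the bookkeeping is equivalent.

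One point your sketch glosses over, which the paper addresses explicitly, is the possibility that the cancellation constraints are \emph{not} independent of the built-in relations $\sum v_i=\sum w_j=0$. Your count $O(N^{(2k-2-d)(1+\varepsilon)})$ for admissible $(\mathbf{v},\mathbf{w})$ given $d$ top cancellations silently assumes all $d+2$ linear conditions are independent; this can fail, but a short argument shows it fails only when the full phase vanishes, i.e.\ precisely in the degenerate case that you treat separately. The paper records this as: the constraint $\sum w_j=0$ is redundant if and only if $l=0$. It would be worth inserting that observation. Apart from this, your two-case bookkeeping arrives at the same bounds (and in fact a marginally sharper exponent in the non-degenerate case than the uniform bound used in the paper), so the proposal is sound.
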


\begin{proof}
Let
	\[
	\left[k\right]\eqdef\left\{ 1,\dots,k\right\} .
	\]
	Let $\I_{1},\I_{1}',\I_{2},\I_{2}',\I_{3},\I_{3}'\subseteq\left[k\right]$
	be (possibly empty) sets of indices. Fixing 
	\[
	\bftau\eqdef\left(\I_{1},\I_{1}',\I_{2},\I_{2}',\I_{3},\I_{3}'\right),
	\]
	we denote by $\V_{k}^{\varepsilon}\left(\bftau\right)$ the set of vectors
	$\left({\bf u},{\bf t}\right)=\left(\left({\bf v},{\bf {\bf w}}\right),\left({\bf x},{\bf y}\right)\right)\in\left(\mathcal{U}_{k}^{\varepsilon}\right)^{2}\times\mathcal{B}_{k}^{2}$
	for which 
	\begin{align*}
	\{i\in[k]:\,\exists_{j(i)\in[k]}:\,x_{i}=y_{j(i)}\} & =\mathcal{I}_{1},\\
	\{j\in[k]:\,\exists_{i(j)\in[k]}:\,x_{i\left(j\right)}=y_{j}\} & =\mathcal{I}_{1}',\\
	\{i\in\I_{1}:v_{i}+w_{j(i)}=0,\mathrm{where}\,j(i)\,\mathrm{is}\,\mathrm{s.t.}\,x_{i}=y_{j(i)}\} & =\I_{2},\\
	\{j\in\I_{1}':v_{i\left(j\right)}+w_{j}=0,\mathrm{where}\,i(j)\,\mathrm{is}\,\mathrm{s.t.}\,x_{i\left(j\right)}=y_{j}\} & =\I_{2}',\\
	\{i\in\left[k\right]\setminus\mathcal{I}_{1}:v_{i}=0\} & =\I_{3},\\
	\{j\in\left[k\right]\setminus\mathcal{I}_{1}':w_{j}=0\} & =\I_{3}'.
	\end{align*}
	If $\V_{k}^{\varepsilon}\left(\bftau\right)$ is non-empty, then clearly
	$\#\mathcal{I}_{1}=\#\mathcal{I}_{1}'$ and $\#\I_{2}=\#\I_{2}'$. Amongst the list of $2k$ variables $x_{1},\dots,x_{k},y_{1},\dots,y_{k},$
	exactly $2k-\#\mathcal{I}_{1}$ distinct variables appear (to see this, recall that by the definition of $\mathcal{B}_{k}$ all numbers $x_1, \dots, x_k$ are distinct, and similarly all numbers $y_1, \dots, y_k$ are distinct). As such if we group similar terms in the corresponding phase function we have
	\begin{align}
	\phi\left({\bf u},{\bf t},\alpha\right) & =v_{1}e^{\alpha x_{1}}+\dots+v_{k}e^{\alpha x_{k}}+w_{1}e^{\alpha y_{1}}+\dots+w_{k}e^{\alpha y_{k}}\label{eq:phi_group}\\
	& =\sum_{i\in\left[k\right]\setminus\left(\mathcal{I}_{1}\cup\mathcal{I}_{3}\right)}v_{i}e^{\alpha x_{i}}+\sum_{j\in\left[k\right]\setminus\left(\mathcal{I}_{1}'\cup\mathcal{I}_{3}'\right)}w_{j}e^{\alpha y_{j}}+\sum_{i\in\mathcal{I}_{1}\setminus\mathcal{I}_{2}}\left(v_{i}+w_{j(i)}\right)e^{\alpha x_{i}},\nonumber 
	\end{align}
	and the number of non-vanishing terms is $$l \eqdef 2k-\#\mathcal{I}_{1}-\#\I_{2}-\#\I_{3}-\#\I_{3}'.$$  Now let us consider the constraints on the variables $ v_1 \dots ,v_k, w_1,\dots,w_k $:
	\begin{itemize}
		\item The constraints $v_{i}=0$ $(i\in\I_{3})$ and $v_{1}+\dots+v_{k}=0$
		(recall that ${\bf v}\in\mathcal{U}_{k}^{\varepsilon}$) determine $\#\I_{3}+1$
		of the variables $v_{1},\dots,v_{k}$ in terms of the other variables;
		note that ${\bf v}\ne(0,\dots,0)$, so that $\#\I_{3}<k-1.$
		\item The constraints $w_{j}=0$ $(j\in\I_{3}'$) and $w_{j}=-v_{i\left(j\right)}$
		($j\in\I_{2}'$) determine $\#\I_{2}'+\#\I_{3}'$ of the variables
		$w_{1},\dots,w_{k}$ in terms of the variables $v_{i}$.
		\item Since $w \in \mathcal{U}_{k}^{\varepsilon}$, we also have the constraint $ w_1+\dots+w_k=0 $ which is either contained in the previous constraints (this happens if and only if $ l=0 $), or determines one more variable.
	\end{itemize}
	To conclude, the constraints on the variables $v_1,\dots,v_k,w_1,\dots,w_k$ determine at least 
	$$
	\left(\#\I_{3}+1\right)+\left(\#\I_{2}'+\#\I_{3}'\right)
	$$
	many of these variables. As such if we denote by $m$ the number of independent variables remaining then 
	\begin{equation}
	\label{m bound}
	m\leq 2k-\#\I_{3}-\#\I_{2}'-\#\I_{3}'-1.
	\end{equation}We relabel these independent variables by $u_{1},\dots,u_{m}.$ Suppose $u_{1},\dots,u_{m}$ are given, then we let $\bfu^{*}(u_1,\dots,u_m)$ denote the unique element of $\mathbb{Z}^{2k}$ for which the conditions corresponding to $\mathcal{I}_{2}'$, $\mathcal{I}_3$ and $\mathcal{I}_{3}'$ are satisfied, and the equations $v_1+\cdots + v_k=0$ and $w_1+\cdots + w_k=0$ are satisfied. We now proceed via a case analysis based upon the value of $l$ to obtain a uniform upper bound for $\sum_{\left({\bf u},{\bf t}\right)\in\V_{k}^{\varepsilon}\left(\bftau\right)}\left|I\left({\bf u},{\bf t}\right)\right|$.\\
	
	\noindent \textbf{Case 1, $l=0$.} If $ l=0 $ then $ \phi(\bfu, \bft, \alpha)=0 $ so that $ I(\bfu,\bft)=1 $. By the above considerations we may conclude that 
		\begin{align*}
	\sum_{\left({\bf u},{\bf t}\right)\in\V_{k}^{\varepsilon}\left(\bftau\right)}\left|I\left({\bf u},{\bf t}\right)\right|&\ll\sum_{\stackrel{|u_{1}|,\dots,|u_{m}|\le2N^{1+\varepsilon}}{\bfu^{*}(u_1,\dots,u_m)\in (\mathcal{U}_{k}^{\epsilon})^{2}}}\sum_{\stackrel{1\le t_{1},\dots,t_{2k-\#\mathcal{I}_{1}}\le N}{t_i\neq t_j}} 1\\
	&\ll  N^{m\left(1+\varepsilon\right)+2k-\#\mathcal{I}_{1}}\le N^{\left(2k-1\right)\left(1+\varepsilon\right)}.
\end{align*}In the final line we used \eqref{m bound} and the fact that $l=0$.\\

\noindent \textbf{Case 2, $l\geq 1$.} 	Assuming $l\geq 1$ we relabel the distinct variables $x_{i},y_{j}$ appearing on the r.h.s. of (\ref{eq:phi_group}) by $t_{1},t_{2},\dots,t_{l}.$ We also relabel by $ s_1,\dots,s_r $ the 
\begin{equation}
	\label{r equation}
	r\eqdef \#\I_{2}+\#\I_{3}+\#\I_{3}' 
	\end{equation}variables $ x_i,y_j$ from our list of distinct variables which do not appear on the r.h.s. of (\ref{eq:phi_group}) because their corresponding exponentials $ e^{\alpha s_i} $ have zero coefficients. Moreover we denote by $\mathbf{t}^{*}(t_1,\ldots,t_{l},s_1,\ldots,s_r)$ the unique element of $\mathcal{B}_{k}^{2}$ determined by $t_1,\ldots,t_{l},$ $s_1,\ldots,s_r,$ and the conditions imposed by $\bftau$. Note that by Corollary \ref{cor: IntegralBound}, we always have $$I\left(\bfu^{*}(u_1,\dots,u_m),{\mathbf{t}^{*}(t_1,\ldots,t_{l},s_1,\ldots,s_r)}\right)\ll|\max_{i} t_{i}|^{-\eta}$$ for any $\eta>0$. By the above considerations we may conclude that
	\begin{align*}
	&\sum_{\left({\bf u},{\bf t}\right)\in\V_{k}^{\varepsilon}\left(\bftau\right)}\left|I\left({\bf u},{\bf t}\right)\right|\\
	&\ll\sum_{\stackrel{|u_{1}|,\dots,|u_{m}|\le2N^{1+\varepsilon}}{\bfu^{*}(u_1,\dots,u_m)\in (\mathcal{U}_{k}^{\epsilon})^{2}}}\sum_{\stackrel{1\le s_{1},\dots,s_{r}\le N}{s_i\neq s_j}}\sum\limits_{\stackrel{1\le t_{1},\dots,t_{l}\le N}{t_{i}\neq t_{j}, t_{i}\neq s_{j}}} |I\left(\bfu^{*}(u_1,\dots,u_m),{\mathbf{t}^{*}(t_1,\ldots,t_{l},s_1,\ldots,s_r)}\right)|\\
	&\ll\sum_{\stackrel{|u_{1}|,\dots,|u_{m}|\le2N^{1+\varepsilon}}{\bfu^{*}(u_1,\dots,u_m)\in (\mathcal{U}_{k}^{\epsilon})^{2}}}\sum_{1\le s_{1},\dots,s_{r}\le N}\sum\limits_{1\le t_{1},\dots,t_{l}\le N}|\max_{i} t_{i}|^{-\eta}\\
	&\ll\sum_{|u_{1}|,\dots,|u_{m}|\le2N^{1+\varepsilon}}\sum_{1\le s_{1},\dots,s_{r}\le N}1
	\ll N^{m\left(1+\varepsilon\right)+r}\le N^{\left(2k-1\right)\left(1+\varepsilon\right)}.
	\end{align*}In the last line we used \eqref{m bound} and \eqref{r equation}.\\
	
Combining the above cases, we have shown that for any value of $l$ we always have $$\sum_{\left({\bf u},{\bf t}\right)\in\V_{k}^{\varepsilon}\left(\bftau\right)}\left|I\left({\bf u},{\bf t}\right)\right|\ll N^{\left(2k-1\right)\left(1+\varepsilon\right)}.$$ Therefore summing over all $O\left(1\right)$ configurations $\bftau$,
we conclude that 
\[
V_{k}\left(N,\J,\varepsilon\right)=\frac{1}{N^{2k}}\sum_{\tau}\sum_{\left({\bf u},{\bf t}\right)\in\V_{k}^{\varepsilon}\left(\bftau\right)}\left|I\left({\bf u},{\bf t}\right)\right|\ll N^{-1+\left(2k-1\right)\varepsilon}.
\] This completes our proof. 
\end{proof}

\section{Proof of Theorem \ref{thm: main}}

Theorem \ref{thm: main} can be deduced from Proposition \ref{prop:MainProp} following a standard argument whose proof in a fairly general setting was given in \cite{YT}.

\begin{prop}[{\cite[Proposition 7.1]{YT}}]
	\label{prop:L2->a.s.}
	Fix $ k\ge2 $, $ J\subset \mathbb{R} $ a bounded interval, and a sequence $ c_k(N) $ such that $ c_k(N)\to 1 $ as $ N\to \infty $. Let $ (\vartheta_n(\alpha))_{n\ge 1}\;(\alpha \in J) $ be a parametrised family of sequences such that the map $ \alpha \mapsto \vartheta_n(\alpha) $ is continuous for each fixed $ n\ge1 $. Assume that there exists $ \rho>0 $ such that for all $ f\in C_c^\infty (\mathbb{R}^{k-1}) $\[ \int_J\left(R_{k}\left(f,\alpha,N\right)-c_{k}\left(N\right)\int_{\mathbb{R}^{k-1}}f\left({\bf x}\right)\,\text{d}{\bf x}\right)^{2}\,\mathrm{d}\alpha = O(N^{-\rho}) \]as $ N\to \infty $. Then for almost all $ \alpha \in J $, the sequence $(\{\vartheta_n(\alpha)\})_{n\ge 1}$ has Poissonian $ k$-point correlation.
\end{prop}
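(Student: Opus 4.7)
The argument is a standard variance-to-almost-sure upgrade, combining Chebyshev's inequality along a polynomial subsequence, a density reduction on the test functions, and a sandwich argument to interpolate between subsequence indices. The genuinely delicate step is the interpolation.

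First I would fix $f \in C_c^\infty(\mathbb{R}^{k-1})$ and a polynomial subsequence $N_j := \lceil j^s \rceil$ with $s\rho > 1$. The hypothesis together with Chebyshev's inequality yields
$$
\lambda\!\left\{\alpha \in J : \left|R_k(f,\alpha,N_j) - c_k(N_j) \int_{\mathbb{R}^{k-1}} f \,\mathrm{d}\bfx\right| > \varepsilon\right\} \ll_\varepsilon N_j^{-\rho}
$$
for every $\varepsilon > 0$, and since $\sum_j N_j^{-\rho} < \infty$, the Borel--Cantelli lemma gives $R_k(f,\alpha,N_j) \to \int f \,\mathrm{d}\bfx$ for almost every $\alpha \in J$. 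Choosing a countable family $\{f_m\} \subset C_c^\infty(\mathbb{R}^{k-1})$ that is dense in $C_c(\mathbb{R}^{k-1})$ in the uniform norm (with common compact supports over an exhaustion of $\mathbb{R}^{k-1}$) and intersecting the resulting full-measure sets over $m$, I obtain a single full-measure set $J' \subseteq J$ on which convergence along $(N_j)$ holds simultaneously for every $f_m$.

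The main obstacle is upgrading convergence along $(N_j)$ for $\{f_m\}$ to convergence along every $N$ for every $f \in C_c^\infty(\mathbb{R}^{k-1})$, since $R_k(f,\alpha,N)$ depends on $N$ both through the normalization $1/N$ and through the scale $N$ appearing inside $f$. I would handle this by bracketing. Assuming first that $f \ge 0$, for $N_j \le N < N_{j+1}$ I would use uniform continuity of $f$ and the fact that $N_{j+1}/N_j \to 1$ to produce nonnegative $f^-, f^+ \in C_c(\mathbb{R}^{k-1})$ with
$$
f^-(\bfy) \le f\bigl(\tfrac{N}{N_j}\bfy\bigr), \qquad f\bigl(\tfrac{N}{N_{j+1}}\bfy\bigr) \le f^+(\bfy)
$$
for all $\bfy$ and all $j$ sufficiently large. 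Combined with the inclusions $\calBN(N_j) \subseteq \calBN(N) \subseteq \calBN(N_{j+1})$ and nonnegativity, these give
$$
\frac{N_j}{N} R_k(f^-,\alpha,N_j) \le R_k(f,\alpha,N) \le \frac{N_{j+1}}{N} R_k(f^+,\alpha,N_{j+1}).
$$
For $\alpha \in J'$, approximating $f^\pm$ by elements of $\{f_m\}$ and letting the envelopes shrink towards $f$ in $L^1$ (possible since uniform continuity together with common compact support yields $\|f^\pm - f\|_\infty \to 0$ as the envelopes are tightened) then yields $R_k(f,\alpha,N) \to \int f\,\mathrm{d}\bfx$ along every $N$. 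A final splitting $f = f_+ - f_-$ handles general $f \in C_c^\infty(\mathbb{R}^{k-1})$, completing the proof.
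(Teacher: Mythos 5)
Your proposal follows the same route the paper itself sketches (citing [YT] for the details): Chebyshev plus Borel--Cantelli along a polynomially sparse subsequence $N_j$, then a sandwiching argument exploiting $N_{j+1}/N_j \to 1$ and $\calBN(N_j)\subseteq\calBN(N)\subseteq\calBN(N_{j+1})$, so the approach is essentially identical. One small caution in the density reduction: sup-norm density of $\{f_m\}$ does not by itself produce the pointwise envelopes $f_{m_1}\le f^{\pm}\le f_{m_2}$ with $\int(f_{m_2}-f_{m_1})$ small that the monotonicity step $R_k(f_{m_1},\alpha,N_j)\le R_k(f^{\pm},\alpha,N_j)\le R_k(f_{m_2},\alpha,N_j)$ needs; one should take $\{f_m\}$ closed under adding and subtracting small smooth bumps on an exhaustion (or, equivalently, reduce to indicators of boxes with rational corners as in the paper's equivalent formulation of Poissonian correlation).
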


\begin{proof}
Poissonian $ k $-point correlation is first established along a polynomially sparse subsequence $ N_m $ using the Borel-Cantelli lemma (or an analogous argument). This is extended to Poissonian $ k $-point correlation along the full sequence by a simple sandwiching argument, using the fact that $ \lim\limits_{m\to\infty}N_{m+1}/N_m =1 $. For the full details see \cite{YT}.
\end{proof}

\begin{proof}[Proof of Theorem \ref{thm: main}]
Theorem \ref{thm: main} follows upon letting $ \vartheta_n(\alpha)=e^{\alpha a_{n}} $, $ J=\J $, $ c_k(N)=C_k(N) $ (recall \eqref{eq:C_k(N)}) and $ \rho=-1+\varepsilon $ in Proposition \ref{prop:L2->a.s.}.
\end{proof}

\end{document}